\newtheorem{thm}{Theorem}[section]
\newtheorem{prop}[thm]{Proposition}
\newtheorem{cor}[thm]{Corollary}
\newtheorem{con}[thm]{Conjecture}
\newtheorem{lem}[thm]{Lemma}
\newtheorem{que}[thm]{Question}
\theoremstyle{remark}
\newtheorem{defn}[thm]{Definition}
\newtheorem{rem}[thm]{Remark}
\newcommand{\CP}{\mathbb{CP}}
\newcommand{\Q}{\mathbb{Q}}
\newcommand{\R}{\mathbb{R}}
\newcommand{\Z}{\mathbb{Z}}
\newcommand{\SO}{\mathrm{\SO}}
\newcommand{\Nil}{\mathrm{Nil}}
\newcommand\rank{\operatorname{rank}}
\newcommand\im{\operatorname{im}}
\title[{\tiny Geometric structures, the Gromov order, Kodaira dimensions $\&$ simplicial volume}]{Geometric structures, the Gromov order, Kodaira dimensions and simplicial volume}
\author{Christoforos Neofytidis and Weiyi Zhang}
\address{Department of Mathematics, Ohio State University, Columbus, OH, USA}
\email{neofytidis.1@osu.edu}
\address{Mathematics Institute, University of Warwick, Coventry, CV4 7AL, UK}
\email{Weiyi.Zhang@warwick.ac.uk}
\date{\today}
\keywords{Kodaira dimension, Thurston geometry, simplicial volume, non-zero degree, Gromov's order, 5-manifold, projective manifold, K\"ahler manifold}
\begin{document}

\maketitle

\begin{abstract}
We introduce an axiomatic definition for the Kodaira dimension and classify Thurston geometries in dimensions $\leq 5$ in terms of this Kodaira dimension. We show that the Kodaira dimension is monotone with respect to the partial order defined by maps of non-zero degree between 5-manifolds. We study the compatibility of our definition with traditional notions of Kodaira dimension, especially the highest possible Kodaira dimension. To this end, we establish a connection between the simplicial volume and the holomorphic Kodaira dimension, which in particular implies that any smooth K\"ahler $3$-fold with non-vanishing simplicial volume has top holomorphic Kodaira dimension.
\end{abstract}

\section{Introduction}

The Kodaira dimension provides a very successful classification tool  for complex manifolds. This concept has been generalised by several authors to symplectic manifolds, especially in dimensions two and four~\cite{LeBrun1,LeBrun2,Li1,DS1,DS2, DZ}, to almost complex manifolds~\cite{CZ},  as well as to manifolds with a geometric decomposition in the sense of Thurston in dimensions three and four~\cite{Zhang, Li2}. Our first goal in the present article is to generalise the traditional notions of Kodaira dimensions by introducing a more systematic study of the Kodaira dimension $\kappa^g$ for manifolds that carry a geometric structure, especially in the sense of Thurston, and provide a complete classification in dimensions $\leq 5$. 

Our proposed approach takes into account both coarse geometric (e.g., curvature) and group theoretic (e.g., fundamental group) structures of the manifold. On the one hand, manifolds that contain factors with compact universal coverings are assigned the lowest possible Kodaira dimension ($-\infty$). On the other hand, the presence of some form of hyperbolicity on the manifold, as generalised by the notion of irreducible locally symmetric spaces of non-compact type (or the non-vanishing of the simplicial volume as we shall explain below), motivates the highest possible value (half of the dimension). These values generalise the well-known case of holomorphic Kodaira dimension for surfaces. Beyond the two ends, we treat in a uniform way solvable (Euclidean-by-Euclidean) geometries and introduce a more systematic consideration of half values as implied by the existence of fiber bundle structures for spaces that do not fall into the two boundary values of the Kodaira dimension.

A significant question in topology, suggested by Gromov~\cite{Gromovbook} and Milnor-Thurston~\cite{MT}, is whether a given numerical homotopy invariant $\iota\in[0,\infty]$ is monotone with respect to maps of non-zero degree, that is, whether the existence of a map of non-zero degree $M\longrightarrow N$ implies $\iota(M)\geq\iota(N)$. In~\cite{Neoorder,Zhang} this question was answered in the affirmative for the Kodaira dimension of manifolds of dimension $\leq 3$ and for geometric manifolds in dimension four. In this paper we show the monotonicity of the Kodaira dimension of geometric 5-manifolds.

\begin{thm}
Let $M$ and $N$ be two closed oriented geometric 5-manifolds. If there is a map of non-zero degree from $M$ to $N$, then $\kappa^g(M)\geq\kappa^g(N)$. 
\end{thm}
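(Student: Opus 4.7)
The plan is to reduce the theorem to a case analysis based on the classification of Thurston $5$-geometries by Kodaira dimension established earlier in the paper. Since $\kappa^g$ takes only finitely many values on geometric $5$-manifolds, it suffices to show, for each ordered pair of geometries $(\mathbb{X},\mathbb{Y})$ with $\kappa^g(\mathbb{X})<\kappa^g(\mathbb{Y})$, that no closed $\mathbb{X}$-manifold $M$ admits a map of non-zero degree onto a closed $\mathbb{Y}$-manifold $N$. I would organise the argument by decreasing $\kappa^g(N)$; the case $\kappa^g(N)=-\infty$ is vacuous.

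When $\kappa^g(N)$ takes its top value, $N$ is modelled on a geometry every closed representative of which has positive simplicial volume --- by the classification, these are built entirely from hyperbolic factors of maximal dimension (e.g.\ $\mathbb{H}^5$ or $\mathbb{H}^3\times\mathbb{H}^2$). Every geometric $5$-manifold with smaller $\kappa^g$ has vanishing simplicial volume, either because its model contains a spherical or flat factor or because one factor has amenable fundamental group; this is essentially the content of the connection between $\|\cdot\|$ and $\kappa^g$ announced in the abstract and developed in the later sections. Monotonicity of $\|\cdot\|$ under maps of non-zero degree then rules out such a domination immediately.

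For intermediate values of $\kappa^g(N)$ --- carried by geometries such as $\mathbb{H}^4\times\mathbb{R}$, $\mathbb{CH}^2\times\mathbb{R}$, $\mathbb{H}^2\times\mathbb{R}^3$, $\mathbb{H}^3\times\mathbb{R}^2$, $\mathbb{H}^2\times\Nil^3$, $\widetilde{\SL}_2\times\mathbb{R}^2$, $\widetilde{\SL}_2\times\mathbb{H}^2$, and the various $\Sol$-type geometries --- the simplicial volume vanishes and finer obstructions are required. My plan is to exploit the virtual fibration intrinsic to each such geometry, typically a nilmanifold, torus, or circle bundle over a lower-dimensional hyperbolic base. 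After passing to a finite cover on which the fibration is honest and the map lifts, I would project the non-zero degree map onto the base, apply the monotonicity of $\kappa^g$ in dimensions $\leq 4$ from \cite{Neoorder,Zhang}, and separately bound the contribution of the fibre using the growth type and virtual nilpotency of $\pi_1$. The cases $\kappa^g(N)=0$ (the flat and $\Nil$-type $5$-geometries) reduce to ruling out domination by an $M$ with $\kappa^g(M)=-\infty$; each such $M$ has a spherical factor in its model, so $\pi_1(M)$ has rational cohomological dimension $\leq 3$, while $N$ is aspherical of dimension $5$, and the classifying map argument then forces any $M\to N$ to have degree zero.

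The main obstacle I anticipate lies in the intermediate range, where several distinct geometries produce equal or adjacent Kodaira dimensions; the solvable, $\widetilde{\SL}_2$-type, and twisted product geometries are especially delicate. The fibration/projection argument has to be carried out uniformly across these cases, and one must ensure that the non-zero degree property survives both the passage to the finite cover that makes the fibration honest and the projection onto the hyperbolic base. A secondary technical point is handling geometries such as $\mathbb{H}^2\times\Nil^3$ or $\widetilde{\SL}_2\times\mathbb{H}^2$, where the base of the virtual fibration is $4$-dimensional, so the $4$-dimensional monotonicity result of \cite{Zhang} has to be invoked in a non-trivial way to extract the desired inequality.
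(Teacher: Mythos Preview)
Your overall architecture matches the paper's: simplicial volume handles the top case $\kappa^g(N)=\tfrac{5}{2}$, and the classifying-space argument handles domains with $\kappa^g(M)=-\infty$ mapping to aspherical targets. The solvable case $\kappa^g(M)=0$ is also straightforward (the paper uses that the image of a solvable group is solvable). Where your plan diverges from the paper, and where it has a genuine gap, is the intermediate range $\kappa^g(M)\in\{1,\tfrac{3}{2}\}$.

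Your intended mechanism there---pass to a cover making the fibration honest, project to the lower-dimensional base, and invoke $4$-dimensional monotonicity---does not work uniformly, because a map $M\to N$ between fibered manifolds need not respect the fibrations. The paper's circle-bundle factorization lemma (its Lemma~\ref{l:factor}) only applies when the center of $\pi_1(N)$ remains infinite cyclic in finite covers, and even then one must first arrange that the $S^1$-fiber of $M$ lands in the center of $\pi_1(N)$. The critical case you have not accounted for is $M$ modeled on $Sol^3\times\mathbb H^2$: here $\pi_1(M)$ has trivial center, so no circle-bundle projection is available on the domain side. For targets $N$ that are \emph{non-trivial} circle bundles over $\mathbb H^2(\mathbb C)$-- or irreducible $\mathbb H^2\times\mathbb H^2$--manifolds, and especially for $N$ modeled on $\widetilde{SL_2}\times_\alpha\widetilde{SL_2}$, the paper needs an entirely different tool: the IIPP (infinite-index presentable by products) theory from \cite{NeoIIPP}, together with a Poincar\'e-duality analysis of the possible factor groups. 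Your plan contains no analogue of this, and the ``project and reduce to dimension~$4$'' idea simply does not produce an obstruction in these cases. A secondary omission is Thom's realization of homology classes, which the paper uses to convert a non-zero $H_k(\pi\circ f)$ into an honest non-zero-degree map between lower-dimensional manifolds; without it the passage from $M\geq N$ to a domination between bases is not justified.
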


This clearly implies the following.

\begin{cor}
Let $M$ and $N$ be two closed oriented geometric 5-manifolds. If there are maps of non-zero degree $M\rightleftarrows N$, then $\kappa^g(M)=\kappa^g(N)$. 
\end{cor}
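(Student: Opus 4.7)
The plan is to derive the corollary as an immediate consequence of the preceding Theorem, which establishes monotonicity of $\kappa^g$ under non-zero degree maps between closed oriented geometric 5-manifolds. The notation $M\rightleftarrows N$ packages two independent non-zero degree maps, one in each direction; applying the Theorem separately to each of them produces the two opposite inequalities whose conjunction is the claimed equality.

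More precisely, the hypothesis supplies a map $f\colon M\to N$ with $\deg(f)\neq 0$ and a map $g\colon N\to M$ with $\deg(g)\neq 0$. Applying the Theorem to $f$ gives $\kappa^g(M)\geq \kappa^g(N)$, and applying it to $g$ gives $\kappa^g(N)\geq \kappa^g(M)$. Combining the two yields $\kappa^g(M)=\kappa^g(N)$, which is the statement of the corollary.

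There is genuinely no further obstacle to overcome. The corollary is a purely formal consequence of the monotonicity statement, and does not require any additional input about the axiomatic Kodaira dimension $\kappa^g$, about the Thurston geometries realised in dimension~$5$, or about the structure of non-zero degree maps beyond what already enters the proof of the Theorem. One may rephrase this observation by saying that $\kappa^g$ descends to a well-defined invariant on the equivalence classes of the Gromov order restricted to geometric $5$-manifolds; but the justification remains the same two-line deduction from the Theorem.
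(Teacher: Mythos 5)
Your proof is correct and matches the paper exactly: the paper states the corollary immediately after the monotonicity theorem with the remark ``This clearly implies the following,'' i.e., it is obtained by applying the theorem to each of the two maps and combining the resulting inequalities, which is precisely your two-line deduction.
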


One of the most prominent examples of monotone invariants is the Gromov norm~\cite{Gromov}. For a topological space $X$ and a homology class $\alpha\in H_n(X;\R)$, the Gromov norm of $\alpha$ is defined to be
\[
\|\alpha\|_1:=\inf \biggl\{\sum_{j} |\lambda_j| \ \biggl\vert  \ \sum_j \lambda_j\sigma_j\in C_n(X;\R) \ \text{is a singular cycle representing } \alpha  \biggl\}.
\]
If $X$ is a closed oriented $n$-dimensional manifold, then the Gromov norm or simplicial volume of $X$ is given by $\|X\|:=\|[X]\|_1$, where $[X]$ denotes the fundamental class of $X$. 
The simplicial volume satisfies an even stronger condition than monotonicity: If $f\colon M\longrightarrow N$ is a map of degree $\deg(f)$, then 
\begin{equation}\label{eq.functorial}
\|M\|\geq|\deg(f)|\|N\|,
\end{equation}
and equality holds when $f$ is a covering map. The non-vanishing of the simplicial volume is a powerful tool to show non-existence of maps of non-zero degree, and the classification of Kodaira dimension suggests that manifolds with top Kodaira dimension are those with non-vanishing simplicial volume. Results of Gromov for hyperbolic manifolds~\cite{Gromov} and Lafont-Schmidt~\cite{LS} and Bucher~\cite{Bucher} for irreducible locally symmetric spaces of non-compact type will motivate one of our building axioms, namely to set the Kodaira dimension of a closed manifold $M$ in the above classes to be $$\kappa^g(M)=\frac{\dim M}{2}.$$
Together with vanishing results, our choice indeed establishes the following connection between top Kodaira dimension and simplicial volume.

\begin{thm}\label{t:topKodGromov}
A closed geometric 5-manifold $M$ has non-zero simplicial volume if and only if $\kappa^g(M)=\frac{5}{2}$.
\end{thm}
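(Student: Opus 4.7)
My plan is to verify the equivalence case by case, using the classification of the closed oriented geometric 5-manifolds together with the computation of $\kappa^g$ on each Thurston 5-geometry, both carried out earlier in the paper.

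For the forward direction $\kappa^g(M)=\tfrac{5}{2}\Rightarrow\|M\|\neq 0$, the axioms force top Kodaira dimension to occur in only three situations. First, when $M$ is modelled on real hyperbolic space $\mathbb{H}^5$, non-vanishing of $\|M\|$ is Gromov's theorem for closed hyperbolic manifolds. Second, when $M$ is modelled on the irreducible symmetric space $\mathrm{SL}_3(\R)/\mathrm{SO}(3)$, non-vanishing follows from the theorems of Lafont-Schmidt and Bucher cited in the introduction. Third, for the reducible product geometry $\mathbb{H}^2\times\mathbb{H}^3$, Gromov's product inequality $\|M_1\times M_2\|\geq\binom{5}{2}\|M_1\|\,\|M_2\|$ combined with non-vanishing on each hyperbolic factor yields $\|M\|>0$.

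For the converse $\|M\|\neq 0\Rightarrow\kappa^g(M)=\tfrac{5}{2}$, I will show that every remaining Thurston 5-geometry produces compact representatives with $\|M\|=0$. The solvable geometries (including the nilpotent ones such as $\Nil^5$ and the various $\Sol$-type geometries) yield closed quotients with amenable fundamental group, so $\|M\|=0$ by Gromov's amenability vanishing. Reducible geometries in which at least one factor is Euclidean or nilpotent admit non-trivial smooth $S^1$-actions on their compact representatives, so $\|M\|=0$ by Yano's theorem. The remaining reducible geometries, whose closed representatives fiber (after possibly passing to a finite cover) over lower-dimensional bases of vanishing simplicial volume as principal circle bundles or mapping tori, are handled by the standard vanishing results for such fibrations.

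The main obstacle will be the bookkeeping across all reducible 5-geometries appearing in the classification: for each geometry with $\kappa^g<\tfrac{5}{2}$ one must exhibit the appropriate amenable cover, non-trivial smooth circle action, or circle fibration forcing $\|M\|=0$. A further subtlety is that some geometries (e.g.\ those with an $\mathbb{H}^2$ or $\mathbb{H}^3$ factor paired with a non-trivial line or $\Nil$ factor) have non-vanishing simplicial volume of the hyperbolic factor but must still be shown to have vanishing simplicial volume of the total space, which is where the circle-action or fibration arguments become essential. Once this verification is carried out geometry by geometry, the theorem follows by directly matching the list of geometries of top $\kappa^g$ with the list of geometries supporting manifolds of non-vanishing simplicial volume.
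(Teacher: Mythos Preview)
Your strategy matches the paper's --- a geometry-by-geometry check using standard (non-)vanishing results for the simplicial volume --- and the paper carries it out as Case~V of the proof of Theorem~\ref{t:monotonicity}. Two corrections are needed before your outline becomes a proof.

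First, the product inequality is misquoted: the binomial coefficient sits in Gromov's \emph{upper} bound $\|M_1\times M_2\|\leq\binom{5}{2}\|M_1\|\,\|M_2\|$, not the lower one. What you actually need for the forward direction is the plain inequality $\|M_1\times M_2\|\geq\|M_1\|\,\|M_2\|$, together with the remark that a closed $\mathbb{H}^2\times\mathbb{H}^3$-manifold is only \emph{virtually} a product of a hyperbolic surface and a hyperbolic $3$-manifold (so one first passes to a finite cover and uses multiplicativity of $\|\cdot\|$ under coverings).

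Second, your three vanishing mechanisms do not exhaust the geometries with $\kappa^g<\tfrac{5}{2}$. The compact geometries $S^5$, $SU(3)/SO(3)$, $S^2\times S^3$ are not in your solvable class (their fundamental groups are finite, hence amenable, but you only invoke amenability for solvable manifolds). More seriously, the geometries $S^2\times\mathbb{H}^3$ and $T^1(\mathbb{H}^3)$ yield $S^2$-bundles over closed hyperbolic $3$-manifolds: they carry no Euclidean or nilpotent factor, they are not circle bundles or mapping tori, and their base has \emph{positive} simplicial volume --- so none of your three categories applies. The paper disposes of all geometries with a compact fiber or base uniformly via Gromov's Mapping Theorem: the classifying map $c_M\colon M\to B\pi_1(M)$ lands in a space of dimension $<5$, forcing $\|M\|=0$. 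Add this as a fourth tool and your case split is complete.
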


It is natural to examine the compatibility of our Kodaira dimension with the existing notions of Kodaira dimensions, such as the holomorphic Kodaira dimension $\kappa^h$ for complex $2n$-manifolds and the symplectic Kodaira dimension $\kappa^s$ of minimal symplectic 4-manifolds. As Theorem \ref{t:topKodGromov} suggests, for the top Kodaira dimension the positivity of the simplicial volume is the connecting principle. We thus need to answer the following questions for the holomorphic and symplectic Kodaira dimension respectively (in this paper we will concentrate on $\kappa^h$).

\begin{que}{\normalfont(\cite[Question 3.13]{Zhang}).}\label{topKod0norm} \
\begin{enumerate}
\item Let $M$ be a smooth $2n$-dimensional complex manifold with non-vanishing simplicial volume. Is $\kappa^h(M)=n$?
\item Let $M$ be a smooth $4$-dimensional symplectic manifold with non-vanishing simplicial volume. Is $\kappa^s(M)=2$?
\end{enumerate}
\end{que}

When $M$ is a K\"ahler surface, the above question was positively answered by Paternain and Petean~\cite{PP}, who showed that $M$ admits an $\mathcal F$-structure in the sense of Cheeger and Gromov~\cite{CG} if and only if the Kodaira dimension is different from two. 
The existence of an $\mathcal F$-structure implies the vanishing of the simplicial volume~\cite{CG,PP}. Moreover, all known examples of compact complex surfaces which are not of K\"ahler type have $\mathcal F$-structure and thus vanishing simplicial volume. In other words, the complex part of Question \ref{topKod0norm} for complex surfaces is reduced to answering the following: {\em Does every complex surface of Class VII have vanishing simplicial volume?} 

Here, we will address the first part of Question \ref{topKod0norm}, giving a uniform treatment in all dimensions, and 
an affirmative answer for K\"ahler $3$-folds will follow from results in algebraic geometry. 

\begin{thm}\label{t:posKod} \
\begin{enumerate}
\item If $M$ is a smooth complex projective $n$-fold with non-vanishing simplicial volume, then $\kappa^h(M)\neq n-1$, $n-2$ or $n-3$.
\item If $M$ is a smooth K\"ahler $3$-fold with non-vanishing simplicial volume, then $\kappa^h(M)=3$.
\end{enumerate}
\end{thm}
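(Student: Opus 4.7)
My approach is to prove both parts by contrapositive, combining the Iitaka (resp.\ MRC) fibration with Gromov's vanishing theorem for amenable open covers. Fix $k=\kappa^h(M)$ and $d=n-k$. When $k\geq 0$, I would resolve the Iitaka fibration to produce a smooth projective (or K\"ahler) model $\widetilde{M}$ equipped with a birational morphism $\pi\colon\widetilde{M}\to M$ and a morphism $\phi\colon\widetilde{M}\to Y$ of relative dimension $d$ whose general fiber $F$ is smooth with $\kappa^h(F)=0$. Since $\deg\pi=1$, the functoriality inequality~\eqref{eq.functorial} gives $\|\widetilde{M}\|\geq\|M\|$, so it suffices to prove $\|\widetilde{M}\|=0$.

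The next step is to verify that $\pi_1(F)$ is amenable whenever $d\in\{1,2,3\}$. For $d=1$ the fiber is an elliptic curve, so $\pi_1(F)\cong\Z^2$. For $d=2$ the Enriques--Kodaira classification identifies the minimal model of $F$ as a K3, Enriques, abelian, or bielliptic surface, each with virtually abelian fundamental group. For $d=3$ the Beauville--Bogomolov decomposition theorem, due to Beauville in the projective case and extended to the K\"ahler setting by Campana--H\"oring--Peternell, shows that a finite \'etale cover of a minimal model of $F$ is a product of an abelian variety with simply-connected strict Calabi--Yau threefolds; hence $\pi_1(F)$ is virtually abelian. Birational invariance of the fundamental group among smooth K\"ahler models then yields amenability of $\pi_1(F)$ in every case.

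With fiber amenability in hand, one covers $Y$ by contractible open charts of multiplicity at most $2k+1\leq 2n-1$ and pulls them back along $\phi$. Over the smooth locus of $\phi$ each preimage is a fiber bundle whose $\pi_1$-image in $\pi_1(\widetilde{M})$ is amenable; the discriminant of $\phi$ has real codimension at least two in $Y$ and can be absorbed into an amenable tubular thickening without increasing the multiplicity beyond $\dim_{\R}\widetilde{M}=2n$. Gromov's vanishing theorem for amenable covers then gives $\|\widetilde{M}\|=0$, hence $\|M\|=0$. For part~(2) the remaining case $\kappa^h(M)=-\infty$ is handled via the K\"ahler analogue of the Miyaoka--Mori uniruledness criterion (H\"oring--Peternell), which forces $M$ to be uniruled; the MRC fibration then has simply connected (rationally connected) general fibers over a base of dimension $\leq 2$, and the same amenable-cover argument applies with trivial $\pi_1(F)$.

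The main technical obstacle is the treatment of the discriminant locus of the Iitaka or MRC fibration, where fibers may degenerate and destroy the fiber-bundle picture that underlies the amenable-cover argument. Controlling the multiplicity of the thickened cover near singular fibers --- either by invoking a semistable reduction of $\phi$ after a further birational modification (which does not affect the reduction $\|\widetilde{M}\|\geq\|M\|$) or by exploiting the real codimension at least two of the singular locus to confine its contribution to a controlled amenable collar --- is the concrete technical point one must address.
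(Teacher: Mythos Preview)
Your overall strategy---Iitaka fibration, amenability of the general fiber for $d\le 3$, and Gromov's vanishing for amenable covers---is the paper's strategy as well. The paper likewise passes to a birational model carrying the Iitaka fibration $\phi\colon X\to Y$ (using birational invariance of $\|\cdot\|$, Lemma~\ref{birgn}) and then applies the corollary of the Mapping theorem for maps to spaces of smaller covering dimension whose point-preimages have amenable neighborhoods.

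The gap you flag at the discriminant is exactly where the paper inserts its key external input: Koll\'ar's theorem \cite[Theorem~2.12]{Kol95} (quoted as Theorem~\ref{Kol2.12}), which guarantees that for \emph{every} $y\in Y$ there is an open $U\ni y$ such that the image of $\pi_1$ of a general fiber has finite index in $\pi_1(\phi^{-1}(U))$. Since the general fiber has amenable $\pi_1$, every point of $Y$---over the discriminant or not---has a neighborhood whose preimage has amenable fundamental group, and Gromov's vanishing applies directly. Neither semistable reduction nor an ad hoc codimension-two collar is needed; your suggested routes might be made to work, but Koll\'ar's result does the job in one stroke and is the concrete ingredient you are missing. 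For the amenability input when $d=3$, the paper cites \cite[(4.17.3)]{Kol95} rather than Beauville--Bogomolov plus abundance; the conclusions coincide.

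Two smaller divergences. For $\kappa^h=-\infty$ the paper avoids the MRC fibration altogether: a uniruled $n$-fold is dominated, after resolving indeterminacy, by some $Y\times\CP^1$, which has $\|Y\times\CP^1\|=0$, and \eqref{eq.functorial} finishes (Proposition~\ref{uniruledgn0}). For part~(2), rather than working directly in the K\"ahler category, the paper reduces to the projective case via algebraic approximation of K\"ahler $3$-folds \cite{BHL,Luni}: every compact K\"ahler $3$-fold is bimeromorphic to one deformation equivalent to a projective manifold, so Lemma~\ref{birgn} plus Corollary~\ref{posKod} conclude. Your direct K\"ahler route is viable but requires the K\"ahler MMP and abundance as separate inputs.
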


In fact, our argument shows that the first part of Question \ref{topKod0norm} for projective manifolds follows from two well known conjectures in algebraic geometry, due to Mumford and Koll\'ar (Conjectures  \ref{uniruled} and \ref{kod0ame} respectively). When the complex dimension is no greater than three, both conjectures are known to be true. The second part of Theorem \ref{t:posKod} then follows from algebraic approximations of compact K\"ahler $3$-folds. Moreover, the first part of Theorem \ref{t:posKod} is actually true for Moishezon manifolds and the second part works for complex $3$-folds of Fujiki class $\mathcal C$.

The authors are very grateful to Michelle Bucher and Tian-Jun Li for very useful discussions. Part of this work was carried out during collaborative visits of the first author at the University of Warwick and the second author at the University of Geneva. The authors would like to thank these institutions for their hospitality and stimulating environments. We also thank the anonymous referees for their useful feedback.

\section{The Kodaira dimension for Thurston geometries}

In this section we give a definition of the Kodaira dimension and classify in terms of this notion closed manifolds that possess a Thurston geometry in dimensions $\leq 5$.

Let $\mathbb{X}^n$ be a complete simply connected $n$-dimensional Riemannian manifold. We say that a closed manifold {\em $M$ is an $\mathbb{X}^n$-manifold}, or that {\em $M$ is modeled on $\mathbb{X}^n$}, or that {\em $M$ possesses the $\mathbb{X}^n$ geometry} in the sense of Thurston, if it is diffeomorphic to a quotient of
$\mathbb{X}^n$ by a lattice $\Gamma$ in the group of isometries of $\mathbb{X}^n$ (acting effectively and transitively). The group $\Gamma$ is the
fundamental group of $M$. Two geometries $\mathbb{X}^n$ and $\mathbb{Y}^n$ are the same whenever there exists a diffeomorphism $\psi \colon \mathbb{X}^n
\longrightarrow \mathbb{Y}^n$ and an isomorphism $\mathrm{Isom}(\mathbb{X}^n) \longrightarrow \mathrm{Isom}(\mathbb{Y}^n)$ which sends each $g \in \mathrm{Isom}(\mathbb{X}^n)$ to $\psi \circ g \circ \psi^{-1} \in \mathrm{Isom}(\mathbb{Y}^n)$.

\subsection{Axiomatic definition}\label{ss:ad}

Let $\mathcal G$ be the smallest class of manifolds that contains all
\begin{itemize}
\item points;
\item manifolds modeled on a compact geometry;
\item solvable manifolds;
\item irreducible symmetric manifolds of non-compact type;
\item fiber bundles or manifolds modeled on fibered geometries, whose fiber and base (geometries) belong in $\mathcal G$.
\end{itemize}
We define the {\em Kodaira dimension} $\kappa^g$ of an $n$-manifold $M\in\mathcal G$ as follows:
\begin{itemize}
\item[(A0)] If $M$ is a point, then $\kappa^g(M)=0$;
\item[(A1)] If $M$ is modeled on a compact geometry, then $\kappa^g(M)=-\infty$;
\item[(A2)] If $M$ is solvable;
then $\kappa^g(M)=0$;
\item[(A3)] If $M$ is irreducible symmetric of non-compact type, then $\kappa^g(M)=\frac{n}{2}$;
\item[(A4)] If $M$ is a fiber bundle or is modeled on a fibered geometry $\mathbb F\to\mathbb X^n\to\mathbb B$, and does not satisfy any of (A1)-(A3), then
$$
\kappa^g(M)=\sup_{F,B}\{\kappa^g(F)+\kappa^g(B)\},
$$
where the supremum runs over all possible manifolds $F$ and $B$ that occur in a fibration $F\to M\to B$ or are modeled on $\mathbb F$ and $\mathbb B$ respectively, and which  satisfy one of the Axioms (A1)-(A3),
\end{itemize}

An immediate consequence of the above definition is the following.

\begin{lem}\label{l:cover}
Let  $M\in\mathcal G$ and suppose $\overline M\to M$ is a finite covering. Then $\overline M\in\mathcal G$ and $\kappa^g(\overline M)=\kappa^g(M)$.
 \end{lem}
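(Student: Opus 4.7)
The plan is to proceed by structural induction on the recursive definition of $\mathcal G$, checking that each defining clause is preserved under finite covers and that the Kodaira dimension is invariant at each stage.

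For the base cases, I would use that each of the four primitive classes is closed under finite coverings and that the value of $\kappa^g$ depends only on data preserved by a finite cover. Explicitly: if $M$ is a point, so is $\overline M$; if $M$ is an $\mathbb X^n$-manifold with $\mathbb X^n$ compact, then $\overline M$ is also an $\mathbb X^n$-manifold, so (A1) gives $\kappa^g(\overline M) = -\infty = \kappa^g(M)$; a finite cover of a solvmanifold is again a solvmanifold, so (A2) gives $\kappa^g(\overline M) = 0 = \kappa^g(M)$; and a finite cover of an irreducible symmetric manifold of non-compact type of dimension $n$ is again one of dimension $n$, so (A3) gives $\kappa^g(\overline M) = n/2 = \kappa^g(M)$.

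For the inductive step under (A4), suppose $M$ sits in a fibration $F \to M \to B$ with $F, B \in \mathcal G$, and let $p \colon \overline M \to M$ be a finite cover. Composing with the bundle projection $q \colon M \to B$ and taking the natural (Stein-type) factorization $\overline M \to \overline B \to B$ produces a finite covering $\overline B \to B$ and a fibration $\overline F \to \overline M \to \overline B$ whose fiber $\overline F$ is a finite cover of $F$. By the induction hypothesis, $\overline F, \overline B \in \mathcal G$ with $\kappa^g(\overline F) = \kappa^g(F)$ and $\kappa^g(\overline B) = \kappa^g(B)$, so $\overline M \in \mathcal G$. The same lifting shows that any admissible fibration of $M$ produces an admissible fibration of $\overline M$ with identical sum $\kappa^g(F)+\kappa^g(B)$, giving $\kappa^g(\overline M) \geq \kappa^g(M)$.

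The main obstacle is the reverse inequality $\kappa^g(\overline M) \le \kappa^g(M)$, because an admissible fibration of $\overline M$ need not obviously descend through $p$. To handle this I would use that the fibrations witnessing the supremum in (A4) arise from the geometric decomposition (the fibered geometry $\mathbb F\to\mathbb X^n\to\mathbb B$ when $M$ is modeled on one, or the canonical bundle structure otherwise) and are therefore invariant under the deck group of $\overline M\to M$. Averaging or quotienting an admissible fibration of $\overline M$ by this finite deck group yields an admissible fibration of $M$ whose fiber and base are finite quotients of $\overline F$ and $\overline B$; applying the induction hypothesis to these quotient covers closes the loop and gives equality of suprema. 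I expect the subtlety, and the piece most worth writing carefully, to lie in checking that this descent respects the specific classes (A1)--(A3) to which $\overline F$ and $\overline B$ belong, which will ultimately follow from the explicit classification of Thurston geometries in dimensions $\le 5$ used throughout the section.
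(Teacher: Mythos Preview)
The paper does not actually prove this lemma; it merely declares it ``an immediate consequence of the above definition'' and moves on. Your structural induction is therefore already more detailed than anything the paper provides.

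The observation that makes the lemma genuinely immediate---and which you could invoke more directly---is the clause ``or are modeled on $\mathbb{F}$ and $\mathbb{B}$ respectively'' in Axiom (A4). When $M$ carries a Thurston geometry $\mathbb{X}^n$, the supremum defining $\kappa^g(M)$ ranges over manifolds $F,B$ modeled on the pieces of any fibering $\mathbb{F}\to\mathbb{X}^n\to\mathbb{B}$ of the \emph{geometry}, not just over fibrations of $M$ itself. This set of admissible $(F,B)$ therefore depends only on $\mathbb{X}^n$, and since a finite cover of an $\mathbb{X}^n$-manifold is again an $\mathbb{X}^n$-manifold, equality of $\kappa^g$ is automatic in the geometric case. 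Combined with your (correct) base cases (A0)--(A3), this already covers everything the paper actually uses in its classification, and no descent argument is needed.

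Your descent step for the reverse inequality in the general fiber-bundle case is the one place where the argument is not quite complete as written: an arbitrary fibration of $\overline{M}$ realizing the supremum need not be equivariant for the deck group, so ``averaging or quotienting'' does not come for free. You correctly flag this as the main obstacle and propose resolving it via the explicit low-dimensional classification; that is reasonable in the context of the paper, but you should be explicit that outside the Thurston-geometric setting this may require a case-by-case check rather than a clean abstract argument.
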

 
\subsection{Classification in dimensions $\leq5$}

\subsection*{Dimension zero}
The Kodaira dimension of a point is equal to zero by (A0).

\subsection*{Dimension one}
The only closed 1-manifold is the circle $S^1=\R/\Z$, i.e., it is modeled on the real line. In particular, $S^1$ is solvable satisfying (A2), hence
\[
\kappa^g(S^1)=0.
\]

\subsection*{Dimension two}
Let $\Sigma_h$ be a surface of genus $h$. If $h=0$, then $\Sigma_0=S^2$ satisfies (A1). If $h=1$, then $\Sigma_1=T^2=\R^2/\Z^2$, i.e., it possesses the Euclidean geometry $\R^2$ which satisfies (A2). Finally, if $h\geq2$, then $\Sigma_h$ is hyperbolic, that is, it is modeled on $\mathbb{H}^2$ and satisfies (A3). Hence, to summarise, we have
\begin{equation*}\label{eq.dim2}
\kappa^g(\Sigma_h)= \left\{\begin{array}{ll}
        -\infty, & \text{if } h=0;\\
        \ \ 0, & \text{if } h=1;\\
        \ \ 1, & \text{if } h\geq2.
        \end{array}\right.
\end{equation*}

\subsection*{Dimension three}
By Thurston's geometrization picture in dimension three, there exist eight geometries~\cite{Thu,Scott}. The compact geometry $S^3$ satisfies (A1), the geometries $\R^3$ (Euclidean), $Nil^3$ (nilpotent) and $Sol^3$ (solvable but not nilpotent) satisfy (A2), and the hyperbolic geometry satisfies (A3). We are left with the three product geometries which do not belong to (A1)-(A3). For the geometry $S^2\times\R$ we have, according to (A4) and the Kodaira dimensions for 1- and 2-manifolds,
\[
\kappa^g(S^2\times S^1)=\kappa^g(S^2)+\kappa^g(S^1)=-\infty.
\]
Every 3-manifold $M$ modeled on $\mathbb{H}^2\times\R$ or $\widetilde{SL_2}$ is a finitely covered by a circle bundle over a closed hyperbolic surface $\Sigma_h$. Hence, (A4), Lemma \ref{l:cover} and the Kodaira dimensions for the circle and hyperbolic surfaces, give us
\[
\kappa^g(M)=\kappa^g(S^1)+\kappa^g(\Sigma_h)=1.
\]
Summarising,
\begin{equation*}\label{eq.dim3}
\kappa^g(M)= \left\{\begin{array}{ll}
        -\infty, & \text{if } M \text{ is modeled on } S^3, \text{ or } S^2\times\R;  \\
        \ \ 0, & \text{if } M \text{ is modeled on } \R^3, Nil^3 \text{ or }  Sol^3;\\
        \ \ 1, & \text{if } M \text{ is modeled on } \mathbb{H}^2\times\R \text{ or } \widetilde{SL_2};\\
        \ \ \frac{3}{2},  & \text{if } M \text{ is modeled on } \mathbb{H}^3.
        \end{array}\right.
\end{equation*}

\subsection*{Dimension four}
In his 1983 thesis, Filipkiewicz~\cite{Filipkiewicz} classified the 4-dimensional geometries. According to that, there exist nineteen geometries, eighteen of which have representatives which are compact manifolds. We now enumerate those geometries following our Axiomatic Definition \ref{ss:ad}. For the notation and details on the structure of each geometry and of manifolds modeled on them, we refer to Filipkiewicz's thesis~\cite{Filipkiewicz}, as well as to papers of Wall~\cite{Wall1,Wall2} and Hillman's monograph~\cite{Hillman}; see also~\cite{NeoIIPP} for some new characterizations for certain geometries of nilpotent and solvable type.

There are three compact geometries, namely $S^4$, $\CP^2$ and $S^2\times S^2$, and those satisfy (A1). Thus, a manifold $M$ modeled on any of those geometries has Kodaira dimension
\[
\kappa^g(M)=-\infty.
\]

There are six solvable geometries satisfying (A2): The Euclidean $\R^4$, the nilpotent $Nil^4$ and $Nil^3\times\R$, and the three solvable but not nilpotent geometries $Sol_0^4$, $Sol_1^4$ and $Sol_{m,n}^4$ (note that $Sol_{m,m}^4=Sol^3\times\R$). Hence for those geometries we have
\[
\kappa^g(M)=0.
\]

Next, (A3) is satisfied by the real and complex hyperbolic geometries, $\mathbb{H}^4$ and $\mathbb{H}^2(\mathbb C)$ respectively, as well as by the irreducible $\mathbb{H}^2\times\mathbb{H}^2$ geometry. We thus have for a manifold $M$ that possesses one of those geometries
\[
\kappa^g(M)=\frac{4}{2}=2.
\]

Finally, we deal with the remaining seven geometries which satisfy (A4): If a manifold $M$ is modeled on one of the geometries $S^2\times\R^2$, $S^2\times\mathbb{H}^2$ or $S^3\times\R$, then it has a finite cover which is fiber bundle with $S^2$- or $S^3$-fiber. Thus $\kappa^g(M)=-\infty$, because $\kappa^g(S^n)=-\infty$ for $n\geq2$. A manifold $M$ modeled on one of the geometries $\mathbb{H}^2\times\R^2$ or $\widetilde{SL_2}\times\R$ has Kodaira dimension $\kappa^g(M)=1$ by the corresponding classifications in lower dimensions, and, for the same reason, if $M$ is an $\mathbb{H}^3\times\R$-manifold, then $\kappa^g(M)=\frac{3}{2}$. Finally, if $M$ is modeled on the reducible geometry $\mathbb{H}^2\times\mathbb{H}^2$, then it is virtually a product of two hyperbolic surfaces, hence $\kappa^g(M)=2$ by the fact that hyperbolic surfaces have Kodaira dimension one. Note that $\kappa^g(M)=2=\frac{4}{2}$ for irreducible $\mathbb{H}^2\times\mathbb{H}^2$-manifolds, as we have seen above.

All this is summarised as follows
\begin{equation*}\label{eq.dim4}
\kappa^g(M)= \left\{\begin{array}{ll}
        -\infty, & \text{if } M \text{ is modeled on } S^4, \CP^2, S^2\times \mathbb{X}^2 \text{ or } S^3\times\R;\\
        \ \ 0, & \text{if } M \text{ is modeled on } \R^4, Nil^4, Nil^3\times\R, Sol_{m,n}^4, Sol_0^4 \text{ or } Sol_1^4;\\
        \ \ 1, & \text{if } M \text{ is modeled on } \mathbb{H}^2\times\R^2 \text{ or } \widetilde{SL_2}\times\R;\\
        \ \ \frac{3}{2},  & \text{if } M \text{ is modeled on } \mathbb{H}^3\times\R;\\
         \ \ 2,  & \text{if } M \text{ is modeled on } \mathbb{H}^4,  \mathbb{H}^2(\mathbb C) \text{ or } \mathbb{H}^2\times\mathbb{H}^2.
        \end{array}\right.
\end{equation*}

\subsection*{Dimension five}
Recently, Geng~\cite{Geng1} gave a classification of the 5-dimensional geometries. According to Geng's list, there exist fifty eight geometries, and fifty four of them are realised by compact manifolds. (Counting from Geng's list one finds fifty nine geometries, because the geometry $Sol^3\times\R^2$, which is $Sol_{m,n}^4\times\R$ for $m=n$, is counted individually.) As before, we will enumerate those geometries following Axioms (A0)-(A4). For a detailed description of each geometry, as well as for the terminology, we refer to the three papers from Geng's thesis~\cite{Geng1,Geng2,Geng3} and to related work; see the references in~\cite{Geng1}. In particular, for the virtual properties of a manifold modeled on each geometry, we refer to the individual sections/results as given in the statements of~\cite[Theorem 1.1]{Geng2} and~\cite[Theorem 1.1]{Geng3}. These descriptions will be used as well in Section \ref{s:monotonicity}. Furthermore, as it is remarked in~\cite[Section 4]{Geng1}, a similar classification for the Thurston geometries was partially done in dimensions six and seven (and thus the Kodaira dimensions of those manifolds can be similarly determined).

\subsubsection*{Manifolds satisfying (A1).}

There are three compact geometries: the 5-sphere $S^5$, the Wu symmetric manifold $SU(3)/SO(3)$ and the product $S^2\times S^3$. A manifold $M$ modeled on these geometries has Kodaira dimension
\[
\kappa^g(M)=-\infty.
\]

\subsubsection*{Manifolds satisfying (A2).}

Naturally, this is one of the most rich classes of new geometries with the various (irreducible) extensions of solvable-by-solvable geometries. There are two nilpotent and six solvable but not nilpotent extensions of type $\R^4\rtimes\R$, denoted by
\[
A_{5,1}, \ A_{5,2} \text{ and } A_{5,7}^{a,b,-1-a-b}, \ A_{5,7}^{1,-1-a,-1+a}, \ A_{5,7}^{1,-1,-1}, \ A_{5,8}^{-1}, \ A_{5,9}^{-1,-1}, \ A_{5,15}^{-1}
\] 
respectively. There are two nilpotent geometries of type $Nil^4\rtimes\R$, denoted by $A_{5,5}$ and $A_{5,6}$. There is one nilpotent and one solvable but not nilpotent geometry of type $(\R\times Nil^3)\rtimes\R$ denoted by $A_{5,3}$ and $A_{5,20}^0$ respectively. There is a solvable but not nilpotent extension $\R^3\rtimes\R^2$ denoted by $A_{5,33}^{-1,-1}$. The last irreducible solvable geometry is $Nil^5$. The remaining solvable geometries are built out of products of lower dimensional geometries: the Euclidean $\R^5$, the nilpotent $Nil^3\times\R^2$, $Nil^4\times\R$, and the solvable but not nilpotent $Sol_0^4\times\R$, $Sol_1^4 \times\R$, $Sol_{m,n}^4\times\R$ (note that $Sol_{m,m}^4\times\R=Sol^3\times\R^2$). A manifold $M$ modeled on any of these geometries has Kodaira dimension
\[
\kappa^g(M)=0.
\]

\subsubsection*{Manifolds satisfying (A3).}

Any manifold modeled on one of the irreducible symmetric geometries of non-compact type $\mathbb{H}^5$ or $SL(3,\R)/SO(3)$ has Kodaira dimension
\[
\kappa^g(M)=\frac{5}{2}.
\]

\subsubsection*{Manifolds satisfying (A4).}

A manifold $M$ modeled on any of the following geometries 
\begin{equation*}
\begin{array}{llll}
         S^2\times S^2\times\R & S^2\times\R^3 & S^2\times Nil^3 & S^2\times Sol^3\\
         S^2\times\mathbb H^2\times\R & S^2\times\widetilde{SL_2} &  S^2\times\mathbb H^3, &S^2\times\mathbb H^3\\
         S^3\times\R^2 & S^3\times\mathbb H^2, & S^4\times\R &  \CP^2\times\R\\
          Nil^3\times_\R S^3 & \widetilde{SL_2}\times_\alpha S^3 & L(a,1)\times_{S^1}L(b,1) & T^1(\mathbb H^3)
        \end{array}
\end{equation*}
satisfies (A4) with fiber or base one of the compact geometries $S^2$, $S^3$, $S^4$ or $\CP^2$. Hence $\kappa^g(M)=-\infty$ by the classification of Kodaira dimensions of manifolds of dimension $\leq4$.

Now, a manifold $M$ modeled on one of the geometries 
\begin{equation*}
\begin{array}{lll}
         \R^3\times\mathbb H^2 & Nil^3\times\mathbb H^2 & Sol^3\times\mathbb H^2\\
        \widetilde{SL_2}\times\R^2 & \R^2\rtimes\widetilde{SL_2} & Nil^3\times_\R\widetilde{SL_2}
\end{array}
\end{equation*}
is fibered with involved geometries $\mathbb H^2$ and a solvable geometry. Hence $\kappa^g(M)=1$.

Every representative $M$ of the $\mathbb H^3\times\R^2$ geometry satisfies (A4), where the supremum is achieved with the geometries $\mathbb H^3$ and $\R^2$, i.e., $\kappa^g(M)=\frac{3}{2}$.

Next, we deal with 5-manifolds which are fibrations over a space of Kodaira dimension two, namely they are modeled on one of geometries
\begin{equation*}
\begin{array}{lll}
         \mathbb H^2\times\widetilde{SL_2} & \mathbb H^2\times\mathbb H^2\times\R & \widetilde{SL_2}\times_\alpha\widetilde{SL_2} \\
         \mathbb H^4\times\R & \mathbb H^2(\mathbb C)\times\R & \widetilde{U(2,1)/U(2)}.
\end{array}
\end{equation*}
Indeed, those geometries are fibered over one of the geometries $\mathbb H^2\times\mathbb H^2$, $\mathbb H^2$ or $\mathbb H^2(\mathbb C)$. Hence, any 5-manifold modeled on the above geometries has Kodaira dimension $\kappa^g(M)=2$.

Finally, a manifold $M$ modeled on the product geometry $\mathbb H^2\times\mathbb H^3$ has top Kodaira dimension $\kappa^g(M)=1+\frac{3}{2}=\frac{5}{2}$.

We summarise the Kodaira dimensions of geometric 5-manifolds below.
\begin{equation*}\label{eq.dim5}
\kappa^g(M)= \left\{\begin{array}{lll}
        -\infty, & \text{if } M \text{ is modeled on } SU(3)/SO(3), S^5, S^2\times \mathbb{X}^3, S^3\times \mathbb{X}^2, S^4\times\R, \CP^2\times\R,  \\
                   & \text{ } \hspace{3.5cm} Nil^3\times_\R S^3, \widetilde{SL_2}\times_\alpha S^3, L(a,1)\times_{S^1}L(b,1) \text{ or } T^1(\mathbb H^3);\\
        \ \ 0, & \text{if } M \text{ is modeled on }  \R^5, \R^4\rtimes\R, \R^3\rtimes\R^2, Nil^5, Nil^4\rtimes\R, (\R\times Nil^3)\rtimes\R,\\ 
        & \text \ \hspace{3.5cm} Nil^4\times\R, Nil^3\times\R^2, Sol_0^4\times\R, Sol_1^4\times\R \text{ or }\\
        & \text \ \hspace{3.5cm}  Sol_{m,n}^4\times\R;\\
        \ \ 1, & \text{if } M \text{ is modeled on }  \mathbb{H}^2\times\R^3, \mathbb{H}^2\times\Nil^3, \mathbb{H}^2\times Sol^3, \R^2\times\widetilde{SL_2},  \R^2\rtimes\widetilde{SL_2}\\
         & \text \ \hspace{3.5cm} \text{or } Nil^3\times_\R\widetilde{SL_2};\\
        \ \ \frac{3}{2},  & \text{if } M \text{ is modeled on }  \mathbb{H}^3\times\R^2;\\
         \ \ 2,  & \text{if } M \text{ is modeled on }  \mathbb{H}^2\times\widetilde{SL_2}, \widetilde{SL_2}\times_\alpha\widetilde{SL_2}, \mathbb{H}^2\times \mathbb{H}^2\times\R, \mathbb{H}^4\times\R, \\  
         & \text \ \hspace{3.5cm} \mathbb{H}^2(\mathbb C)\times\R \text{ or } \widetilde{U(2,1)/U(2)};\\
         \ \ \frac{5}{2},  & \text{if } M \text{ is modeled on }   \mathbb{H}^5, SL(3,\R)/SO(3) \text{ or } \mathbb H^3\times\mathbb H^2.
        \end{array}\right.
\end{equation*}

\subsection{Remarks on the definition and classification} 

\subsubsection{Half integers and bundle additivity}
Half integers for the Kodaira dimension were introduced in~\cite{Zhang} for hyperbolic 3-manifolds. This is a natural development, taking into account the known top Kodaira dimension for complex manifolds and the simplicial volume; see also Section \ref{s:Gromovnorm}. Moreover, an additivity condition for fiber bundles was introduced in~\cite{LZ}, similarly to Axiom (A4). 
 Hence, although in~\cite{Zhang} the Kodaira dimension for $\mathbb H^3\times\R$ is defined to be one, it seems natural to define it to be equal to $\frac{3}{2}$. Indeed, a closed 4-manifold $M$ modeled on $\mathbb H^3\times\R$ is finitely covered by a product $F\times S^1$, where $F$ is a hyperbolic 3-manifold. Since solvable manifolds (in this case, the circle) have Kodaira dimension zero (by (A2)), we obtain the value
\[
\kappa^g(M)=\kappa^g(F)+\kappa^g(S^1)=\frac{3}{2}.
\]
The requirement on the supremum in (A4) becomes now clear: If $F$ is a mapping torus of a pseudo-Anosov diffeomorphism of a hyperbolic surface $\Sigma$ (every hyperbolic 3-manifold is virtually of this form~\cite{Agol}), then $M$ is a fiber bundle $\Sigma\to M\to T^2$. In that case, $\Sigma$ is irreducible locally symmetric of non-compact type, the 2-torus is solvable and therefore $\kappa^g(\Sigma)+\kappa^g(T^2)=1$. The supremum however is achieved with the fibration $F\to M\to S^1$.

Note that the monotonicity result for the Kodaira dimension of 4-manifolds with respect to maps of non-zero degree given in~\cite[Theorem 1.2]{Neoorder} is not affected with this new value for $\mathbb H^3\times\R$-manifolds. In fact, it reveals exactly the difference with the two 4-dimensional geometries with Kodaira dimension one, namely $\mathbb{H}^2\times\R^2
$ and $\widetilde{SL_2}\times\R$: As shown in~\cite[Theorem 1.1]{Neoorder}, not only no $\mathbb H^3\times\R$-manifold admits a map of non-zero degree from a manifold modeled on one of the geometries $\mathbb{H}^2\times\R^2$ or $\widetilde{SL_2}\times\R$, but, moreover, given any manifold $N$ which is modeled on one of the latter two geometries, then there is an $\mathbb H^3\times\R$-manifold $M$ and a map $M\longrightarrow N$ of non-zero degree. 

\subsubsection{Generalised Class VII surfaces}

Our Kodaira dimension is compatible with the holomorphic one for K\"ahler manifolds. However, according to Axiom (A2), the Kodaira dimension for $Sol_0^4$- and $Sol_1^4$-manifolds is zero instead of $-\infty$ as defined in~\cite{Zhang}. This is  again compatible with Axiom (A4), because those geometries are solvable-by-solvable, and lower dimensional solvable geometries have Kodaira dimension zero. In~\cite{Zhang}, the Kodaira dimension for $Sol_0^4$- and $Sol_1^4$-manifolds was defined to be $-\infty$ following Wall's scheme for complex non-K\"ahler surfaces~\cite{Wall2}. 
We could have required the Kodaira dimension of those manifolds, as well as of $Sol_{m\neq n}^4$-manifolds, to be indeed $-\infty$ as they have vanishing virtual second Betti number and thus admit no symplectic structures. 
However, in this paper, we have chosen to introduce the Kodaira dimension taking a unified value (zero) for solvable manifolds, keeping thus our axiomatic approach natural with the least possible assumptions.

\begin{rem}
Axiom (A4) is also strongly related to the Iitaka conjecture~\cite{Ii}, which states that the holomorphic Kodaira dimension for an algebraic fibration $F\to M\to B$ satisfies
$$
\kappa^h(M)\geq\kappa^h(F) + \kappa^h(B).
$$
In fact, our set of Axioms matches with the picture of Iitaka fibration in algebraic geometry, which is applied to compute the simplicial volume in Section \ref{s:Gromovnorm}.
\end{rem}

\subsubsection{Geometries with no compact representatives}

A phenomenon that appears in dimensions four and above is that of geometries that have no compact representatives, but still manifolds with finite volume. Being interested mostly in the monotonicity of the Kodaira dimension with respect to non-zero degree maps, and thus in compact manifolds, we omitted those geometries from our classification. It is nevertheless worth giving their values:
\begin{itemize}
\item In dimension four, the geometry $\mathbb F^4$ is realised by $T^2$-bundles over punctured hyperbolic surfaces~\cite{Hillman}. According to our axioms, any manifold $M$ modeled on $\mathbb F$ has $$\kappa^g(M)=1.$$ This coincides with the definition given in~\cite{Zhang}.
\item In dimension five, one has the geometries $\mathbb F^4\times\R$, $T^1(\R^{1,2})=\R^3\rtimes SO(1,2)^0/SO(2)$, and two quotients of $Nil^3\rtimes\widetilde{SL_2}$, which are denoted by $\mathbb F_0^5$ and $\mathbb F_1^5$. A manifold $M$ modeled on any of those geometries has virtually the structure of a circle bundle over an $\mathbb F^4$-manifold~\cite{Geng3}, hence it has Kodaira dimension $$\kappa^g(M)=1.$$
\end{itemize}

\subsubsection{Beyond Thurston's geometries}

The definition and classification of Kodaira dimension goes well beyond Thurston's geometries. Such a classification was given in~\cite{Zhang} for 3-manifolds, following the torus and sphere decompositions for 3-manifolds. One cannot hope for such a general result in higher dimensions based on geometric structures, as there exist manifolds that possess no geometric structures or decompositions. Moreover, there are diffeomorphic K\"ahler $n$-folds with different Kodaira dimensions when $n\ge 3$ \cite{Ras}.
Nevertheless, following decomposition results in dimension four~\cite{Hillman} and developing a similar theory for the recently classified geometries in dimension five one should be able to associate a numerical homotopy invariant for a much wider class of manifolds that will contain Thurston's geometries which is monotone with respect to maps of non-zero degree. We might include more manifolds by considering decomposition with pieces of Einstein manifolds.

Furthermore, our definition includes many more general classes that are not geometric. For example, in dimension four, the Kodaira dimension of a (not necessarily geometric) fiber bundle $F\to M\to B$ is
\begin{equation*}\label{eq.surfacebundles}
\kappa^g(M)= \left\{\begin{array}{ll}
        -\infty, & \text{if one of } F,B  \text{ is } S^2  \text{ or finitely covered by } \#_{m\geq0}S^2\times S^1;\\
        \ \ 0, & \text{if } F=B=T^2, \text{ or one of } F,B \text{ is a 3-manifold which is not finitely}\\
        & \text{covered by }\#_{m\geq0}S^2\times S^1  \text{ and contains no } \mathbb H^2\times\R,\widetilde{SL_2}  \text{ or } \mathbb H^3  \text{ pieces in its}\\
        &  \text{torus or sphere decomposition};\\
         \ \ 1, & \text{if one of } F,B  \text{ is } T^2  \text{ and the other is hyperbolic, or one of } F,B  \text{ is a}\\
         & \text{3-manifold which has at least one } \mathbb H^2\times\R \text{ or } \widetilde{SL_2} \text{ piece and no } \mathbb H^3  \text{ pieces}\\
         &  \text{in its torus or sphere decomposition};\\
          \ \ \frac{3}{2}, & \text{if one of } B,F  \text{ is a 3-manifold with at least one } \mathbb H^3  \text{ piece in its torus or}\\
        &  \text{sphere decomposition};\\
        \ \ 2, & \text{if both } F \text{ and } B \text{ are hyperbolic surfaces}.
        \end{array}\right.
\end{equation*}

The connection to the simplicial volume suggested by Theorem \ref{t:topKodGromov} is apparent: For the above fibration, $\|M\|>0$ if and only if $F$ and $B$ are hyperbolic surfaces~\cite[Corollary 1.3]{BN}. 
Also, this definition should be absolutely compatible with maps of non-zero degree. Namely, Gromov asks whether, given any manifold $N$, we can find a surface bundle $M$ and a map $M\longrightarrow N$ of non-zero degree~\cite[pg. 753, {\em Topological version of Bogomolov’s question}]{Gromovmaps}. 

\section{The Gromov order}\label{s:monotonicity}

Given two closed oriented $n$-manifolds $M$ and $N$, we say that $M$ {\em dominates} $N$ if there is a map $M\longrightarrow N$ of non-zero degree, and we denote this by $M\geq N$. In 1978, Gromov suggested studying the domination relation as a partial order~\cite{CT}. In dimension two, the domination relation is a total order given by the genus, as it can be easily seen that $\Sigma_g\geq\Sigma_h$ if and only if $g\geq h$. In higher dimensions, however, such an order is impossible. Nevertheless, various results have been obtained with respect to this order by many authors~\cite{Bel,BBM,CT,KN,Neoorder,Rong,Wang1}. As suggested by the monotonicity of the simplicial volume (inequality (\ref{eq.functorial})), one hopes to be able to understand whether a numerical invariant is monotone with respect to the domination relation; see Gromov~\cite{Gromovbook} and Milnor-Thurston~\cite{MT}. The Kodaira dimension is indeed monotone in dimensions two (obviously), three~\cite{Zhang} (see also~\cite{Neoorder} for an alternative proof based on~\cite{Wang1,KN}), and four~\cite{Neoorder}.

We prove that the Kodaira dimension for geometric 5-manifolds is monotone with respect to Gromov's order.

\begin{thm}\label{t:monotonicity}
Let $M$ and $N$ be two closed oriented geometric 5-manifolds. If $M\geq N$, then $\kappa^g(M)\geq\kappa^g(N)$.
\end{thm}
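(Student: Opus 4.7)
My plan is to argue by case analysis on $\kappa^g(N)\in\{-\infty,0,1,\frac{3}{2},2,\frac{5}{2}\}$, using the classification tabulated in Section 2.5. The case $\kappa^g(N)=-\infty$ is vacuous. The top case $\kappa^g(N)=\frac{5}{2}$ is immediate from Theorem \ref{t:topKodGromov}: the functoriality inequality (\ref{eq.functorial}) gives $\|M\|\geq|\deg(f)|\,\|N\|>0$, whence $\kappa^g(M)=\frac{5}{2}$. The substantive work lies in the four intermediate values, where the target has vanishing simplicial volume and Theorem \ref{t:topKodGromov} cannot be applied directly.

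For the intermediate cases I would argue by contrapositive: if $\kappa^g(M)$ sits strictly below a given threshold $k$ and $N$ has $\kappa^g(N)=k$, then no non-zero degree map $M\to N$ can exist. The main obstructions I expect to deploy are (i) the virtual fundamental group structure read off from Geng's lists in \cite{Geng2,Geng3} (normal $\mathbb Z^r$ subgroups, virtual polycyclicity, surface-group factors, the amenable radical); (ii) asphericity and classifying-space arguments, which force any dominant map to an aspherical target to be essentially $\pi_1$-surjective after passing to finite covers; (iii) the three- and four-dimensional monotonicity results of \cite{Zhang,Neoorder} applied to the base or fiber of a fibration induced on suitable finite covers; and (iv) vanishing of simplicial volume for fibered targets with an amenable factor, which separates several adjacent Kodaira values at a time.

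Concretely, for $\kappa^g(N)=2$ the target either virtually contains two commuting non-virtually-abelian subgroups (in the product-type geometries $\mathbb H^2\times\widetilde{SL_2}$, $\widetilde{SL_2}\times_\alpha\widetilde{SL_2}$, $\mathbb H^2\times\mathbb H^2\times\R$) or virtually covers a four-dimensional rank-one locally symmetric space ($\mathbb H^4\times\R$, $\mathbb H^2(\mathbb C)\times\R$, $\widetilde{U(2,1)/U(2)}$). In each subcase I would show that $\pi_1(N)$ cannot appear as a quotient of a virtual $\pi_1(M)$ coming from a geometry of lower Kodaira dimension, by tracking the maximal solvable normal subgroup and the number of non-virtually-solvable factors in $\pi_1(M)$. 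For $\kappa^g(N)=\frac{3}{2}$, $N$ is finitely covered by $F\times T^2$ with $F$ a closed hyperbolic $3$-manifold; after quotienting $\pi_1(M)$ by the preimage of the central $\mathbb Z^2$ one obtains a non-zero degree map (up to finite covers) to $F$, and the three-dimensional monotonicity of \cite{Zhang} rules out $\kappa^g(M)<\frac{3}{2}$. The cases $\kappa^g(N)=1$ and $\kappa^g(N)=0$ are similar but easier: one peels off the solvable factors and reduces to monotonicity in dimensions $\leq 4$, using that any $M$ with $\kappa^g(M)=-\infty$ has a finite cover whose universal cover admits an $S^k$-factor with $k\geq 2$, which obstructs essential maps to aspherical targets.

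The main difficulty I anticipate lies in the two $\kappa^g=2$ geometries that are not locally symmetric, namely $\widetilde{SL_2}\times_\alpha\widetilde{SL_2}$ and $\widetilde{U(2,1)/U(2)}$: both have vanishing simplicial volume because of a central $\R$-direction in their isometry groups, so Gromov-norm arguments fail to separate them from candidate sources of lower Kodaira dimension. Handling these will require the detailed lattice and bundle descriptions from \cite{Geng2,Geng3}, combined with the structure of the solvable radical of $\pi_1$ and the virtual second Betti number, in order to rule out dominations from lower-$\kappa^g$ $5$-manifolds. A milder version of this difficulty also appears at $\kappa^g=\frac{3}{2}$, but there it should be resolved cleanly by the reduction to the three-dimensional monotonicity sketched above.
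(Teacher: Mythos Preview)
Your overall plan---a contrapositive case analysis showing $M\ngeq N$ whenever $\kappa^g(M)<\kappa^g(N)$---matches the paper's, though the paper organises by the value of $\kappa^g(M)$ rather than $\kappa^g(N)$. Your handling of the top case via simplicial volume is the paper's Case~V (note that in the paper Theorem~\ref{t:topKodGromov} is a \emph{consequence} of Case~V, so citing it is mildly circular; but the underlying input, namely the computations of Gromov, Bucher and Lafont--Schmidt, is independent). Your ideas for $\kappa^g(M)=-\infty$ (classifying spaces) and $\kappa^g(M)=0$ (solvable images) coincide with the paper's Cases~I and~II.

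Two steps in your sketch, however, have genuine gaps. First, for $\kappa^g(N)=\frac{3}{2}$: the phrase ``after quotienting $\pi_1(M)$ by the preimage of the central $\mathbb Z^2$ one obtains a non-zero degree map to $F$'' does not produce a $3$-manifold dominating $F$, so the three-dimensional monotonicity of~\cite{Zhang} cannot be invoked. When $M$ is modeled on $Sol^3\times\mathbb H^2$, for instance, $\pi_1(M)$ has trivial centre and there is no $T^2$-quotient; the paper handles this subcase via Thom realisation combined with the K\"unneth decomposition of $H^3(E\times\Sigma_g)$ to manufacture an actual $3$-manifold mapping to $F$, and uses a circle-bundle factorisation lemma (Lemma~\ref{l:factor}) for the geometries that do have a central circle. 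Second, and more seriously, your proposed invariants for the hard $\kappa^g(N)=2$ targets---solvable radical, counting non-virtually-solvable factors, virtual $b_2$---do not visibly suffice for the case $M$ on $Sol^3\times\mathbb H^2$ versus $N$ on $\widetilde{SL_2}\times_\alpha\widetilde{SL_2}$. Here $\pi_1(N)$ is irreducible yet \emph{is} infinite-index presentable by products, so the off-the-shelf obstruction of~\cite{NeoIIPP} fails; the paper runs a bespoke argument classifying the presenting subgroups $\Gamma_1,\Gamma_2$ as Poincar\'e duality groups (via the results of Eckmann, Bowditch and Thomas) and then invokes the Factorisation Lemma~\ref{l:factorization} and the ordering of $3$-dimensional geometries. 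Your factor-counting heuristic would at minimum require the non-trivial lemma that a hyperbolic surface group cannot surject onto a product of two hyperbolic surface groups, which you neither isolate nor prove.
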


Before proceeding to our argument, let us first recall some tools and properties that we will need at various stages of the proof.

\subsubsection*{Passing to finite coverings}

We will use virtual properties of manifolds under consideration, such as a desired product or fiber bundle structure. We will do that after lifting our maps as follows: Given a (hypothetical) map of non-zero degree $f\colon M\longrightarrow N$, the group $f_*(\pi_1(M))$ has finite index in $\pi_1(N)$, and so we can lift $f$ to a $\pi_1$-surjective map $\widetilde f\colon M\longrightarrow \widetilde N$, where $\widetilde N\to N$ is the covering corresponding to $f_*(\pi_1(M))$. Sometimes this alone is enough. If we want to achieve further virtual properties, then we consider the finite covering $p\colon\widehat N\to\widetilde N$ (which is also a finite covering of $N$) that has the desired virtual property (e.g., $\widehat N$ has a product or fiber bundle structure). Then there is a covering $q\colon\widetilde M\to M$ corresponding to $\widetilde f_*^{-1}(p_*(\pi_1(\widehat N)))$ such that $\widetilde f\circ q$ lifts to a $\pi_1$-surjective map $\widehat f\colon\widetilde M\longrightarrow\widehat N$. If $\widetilde M$ has the desired properties  (e.g.,  product or fiber bundle structure), then we work with  that map. Otherwise, let $\widehat q\colon\widehat M\to \widetilde M$ be the finite covering with the desired properties and either we work with the map $\widehat f\circ\widehat q\colon\widehat M\longrightarrow\widehat N$ or we lift further $\widehat f\circ\widehat q$ to a $\pi_1$-surjective map.

\subsubsection*{Killing normal subgroups}

In certain cases, after passing to finite covers as explained above, the existence of a normal solvable subgroup in the fundamental group of the domain will simplify the argument. For instance, let $f\colon M\longrightarrow N$ be a $\pi_1$-surjective map, where $M$ and $N$ are aspherical $n$-manifolds. Moreover, suppose $\pi_1(M)$ has non-trivial center $C(\pi_1(M))$, such that $\pi_1(M)/C(\pi_1(M))$ has cohomological dimension $<n$. By $\pi_1$-surjectivity, we obtain $f_*(C(\pi_1(M)))\subseteq C(\pi_1(N))$. Thus, if $C(\pi_1(N))=1$, then we immediately obtain that $H_n(f)=0$, because $f$ factors, up to homotopy, through a space of lower cohomological dimension. Hence, $\deg(f)=0$.

\subsubsection*{Realisation of homology classes by manifolds}

Another tool in showing non-existence of certain maps of non-zero degree is given by Thom's solution~\cite{Thom} of Steenrod's realisation problem~\cite{Eilen}: If $X$ is a topological space and $\alpha\in H_k(X;\Z)$, then there is an integer $d>0$ and a closed $k$-manifold $E$, together with a continuous map $g\colon E\longrightarrow X$, such that $H_k(g)([E])=d\alpha$. For $k\leq 6$, we can take $d=1$. Suppose now $f\colon M\longrightarrow N\times B$ is a map of non-zero degree, and let $\pi\colon N\times B \to B$ be the projection to $B$, where $\dim B<\dim M$. Then, by Poincar\'e Duality, there is a non-trivial homology class $\alpha\in H_{\dim B}(M;\Q)$ such that $H_{\dim B}(\pi\circ f)(\alpha)=[B]$. Thom's theorem~\cite{Thom} guarantees the existence of a manifold $E$ of dimension $\dim B$ together with a continuous map $
h=\pi\circ f\circ g\colon E\longrightarrow B$, such that $H_{\dim B}(h)([E])\neq0\in H_{\dim B}(B;\Z)$. In particular, $E\geq B$. Hence, if we knew that the latter is not possible, i.e., $E\ngeq B$, then we arrive at a contradiction, and so $\deg(f)=0$.

\begin{proof}[Proof of Theorem \ref{t:monotonicity}]
We will show that $M\ngeq N$, whenever $\kappa^g(M)<\kappa^g(N)$. We organise the proof according to the Kodaira dimension of $M$ or $N$. More specifically, we first examine the cases where $\kappa^g(M)=-\infty,0,1$ or $\frac{3}{2}$ and $\kappa^g(N)\neq\frac{5}{2}$. Then we give a uniform treatment for the case  $\kappa^g(N)=\frac{5}{2}$, using only the simplicial volume (although other of our arguments would apply as well), proving in particular Theorem \ref{t:topKodGromov}.

\subsubsection*{Case I: $\kappa^g(M)=-\infty$}

Let $B\pi_1(M)$ be the classifying space of $\pi_1(M)$ and denote by $c_M\colon M\longrightarrow B\pi_1(M)$ the classifying map. Since $M$ is modeled on a geometry which is a (possibly trivial) fibration with a compact fiber or base, we conclude that the induced homomorphism $H_5(c_M)\colon H_5(M;\Q)\longrightarrow H_5(B\pi_1(M);\Q)$ is zero.  On the other hand, if $N$ is a manifold modeled on one of the other geometries with Kodaira dimension $0,1,\frac{1}{2}$ or $2$, then $N$ is aspherical and, thus, its classifying map is homotopic to the identity. Suppose now $f\colon M\longrightarrow N$ is a continuous map and let $Bf_*\colon B\pi_1(M)\longrightarrow B\pi_1(N)$ be the induced map between the classifying spaces. Then there is a commutative diagram as follows.
$$
\xymatrix{
H_5(M;\Q) \ar[r]^{H_5(f)} \ar[d]_{H_5(c_M)} & H_5(N;\Q) \ar[d]^{H_5(c_N)}\\
H_5(B\pi_1(M);\Q) \ \ \ar[r]^{H_5(Bf_*)}  & \ \ H_5(B\pi_1(N);\Q) \\
}
$$
Since $H_5(c_M)=0$ and $H_5(c_N)=id$, we conclude that $$H_5(f)=H_5(c_N\circ f)=H_5(Bf_*\circ c_M)=0,$$ which implies $\deg(f)=0$.

\subsubsection*{Case II: $\kappa^g(M)=0$} Suppose $M$ possesses a solvable geometry and let $f\colon M\longrightarrow N$ be a $\pi_1$-surjective map, i.e., $f_*(\pi_1(M))=\pi_1(N)$. If $N$ has Kodaira dimension $1,\frac{3}{2}$ or $2$, then $\pi_1(N)$ is not solvable, and thus $\deg(f)=0$ by the following group theoretic lemma whose proof is left to the reader.

\begin{lem}\label{l:solvable}
Let $H_1,H_2$ be two groups and $\varphi\colon H_1\longrightarrow H_2$ be a homomorphism. If $H_1$ is solvable, then $\varphi(H_1)\subseteq H_2$ is a solvable subgroup.
\end{lem}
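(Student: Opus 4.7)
The plan is to invoke the derived series characterization of solvability and observe that derived series behave well under group homomorphisms. Recall that the derived series of a group $G$ is defined by $G^{(0)}=G$ and $G^{(k+1)}=[G^{(k)},G^{(k)}]$, and $G$ is solvable precisely when $G^{(n)}=\{e\}$ for some $n\geq 0$. So it suffices to show that the derived series of $\varphi(H_1)$ terminates at the trivial group in at most the same number of steps as that of $H_1$.

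The key step is the identity $\varphi(H_1^{(k)}) = \varphi(H_1)^{(k)}$ for every $k\geq 0$, which I would prove by induction on $k$. The base case $k=0$ is the definition of $\varphi(H_1)$. For the inductive step, since $\varphi$ is a homomorphism we have $\varphi([a,b]) = [\varphi(a),\varphi(b)]$ for all $a,b\in H_1$; because the commutator subgroup is generated by commutators, this gives $\varphi([A,A]) = [\varphi(A),\varphi(A)]$ for any subgroup $A\leq H_1$ (the inclusion $\subseteq$ follows from the commutator identity, and the reverse inclusion uses that every commutator in $\varphi(A)$ is the $\varphi$-image of a commutator in $A$). Applying this with $A=H_1^{(k)}$ and using the inductive hypothesis yields
\[
\varphi(H_1^{(k+1)}) = \varphi([H_1^{(k)},H_1^{(k)}]) = [\varphi(H_1^{(k)}),\varphi(H_1^{(k)})] = [\varphi(H_1)^{(k)},\varphi(H_1)^{(k)}] = \varphi(H_1)^{(k+1)}.
\]

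To conclude, by hypothesis $H_1$ is solvable, so $H_1^{(n)}=\{e\}$ for some $n$. The identity just established then gives $\varphi(H_1)^{(n)} = \varphi(H_1^{(n)}) = \{e\}$, so $\varphi(H_1)$ is solvable as a subgroup of $H_2$. There is essentially no obstacle here; the only technicality worth stating carefully is the compatibility of $\varphi$ with the commutator subgroup operation, which is the content of the inductive step above.
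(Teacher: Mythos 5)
Your proof is correct and is the standard derived-series argument; the paper explicitly leaves the proof of this lemma to the reader, and the argument you give (the identity $\varphi(H_1^{(k)})=\varphi(H_1)^{(k)}$ by induction, using that $\varphi$ carries commutators to commutators) is surely the intended one.
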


\subsubsection*{Case III: $\kappa^g(M)=1$}

First, let $M$ be a manifold modeled on one of the geometries $\mathbb{H}^2\times\R^3$, $\R^2\times\widetilde{SL_2}$ or $\R^2\rtimes\widetilde{SL_2}$. Then, up to finite covers, $M$ is a circle bundle over a (semi-)direct product $E$ of the 2-torus with a (possibly punctured) hyperbolic surface; see~\cite[Section 5]{Geng3},~\cite[Prop. 6.23 and Table 6.24]{Geng3} and~\cite[Prop. 6.17 and Tables 6.19 and 6.21]{Geng3} respectively. In particular, $\pi_1(M)$ has non-trivial center. 

\begin{rem}\label{r:productcover}
Note that an aspherical 5-manifold $M$ modeled on a non-solvable product geometry $\mathbb X\times\R^k$, $1\leq k\leq3$, is virtually a product of an $\mathbb X$-manifold with the $k$-torus by arguments similar to those of~\cite{Hillman}. Adapting the argument of~\cite[Theorem 9.3]{Hillman}, given for the 4-dimensional geometries $\mathbb H^2\times\R^2$, $\mathbb H^3\times\R$ and $\widetilde{SL}_2\times\R$, the fundamental group of the 5-manifold $M$ has (up to finite index subgroups) center $C(\pi_1(M))\cong\Z^l\times\Z^k$, where $l$ is the maximum rank of the center of the fundamental group of a manifold $N$ modeled on $\mathbb X$, which is $\mathbb H^4$, $\mathbb H^2(\mathbb C)$, $\mathbb H^2\times\mathbb H^2$, $\mathbb H^3$, $\widetilde{SL_2}$ or $\mathbb H^2$ (i.e., $l=0$ or $1$). Then similarly to~\cite[Theorem 9.3]{Hillman} the projection to the Euclidean factor maps $C(\pi_1(M))$ injectively and $\pi_1(M)$ preserves the foliation of the model space by copies of the Euclidean factor. 
\end{rem}

Every map from $E$ to a 4-manifold $B$, which is modeled on one of the geometries $\mathbb H^4$, $\mathbb H^2(\mathbb C)$ or $\mathbb H^2\times\mathbb H^2$, has degree zero, because $\|B\|>0$ and $\|E\|=0$ by~\cite{Gromov,LS,Buch}. Now, every 5-manifold $N$ of Kodaira dimension two is a circle bundle over a 4-manifold modeled on one of the geometries $\mathbb H^4$, $\mathbb H^2(\mathbb C)$ or $\mathbb H^2\times\mathbb H^2$; see~\cite[Prop. 6.23 and Tables 6.24 and 6.29]{Geng3} for the geometries $\mathbb{H}^2\times\widetilde{SL_2}$, $\widetilde{SL_2}\times_\alpha\widetilde{SL_2}$ and $\mathbb{H}^2\times \mathbb{H}^2\times\R$ and~\cite[Prop. 4.1 and 4.2 and Table 4.3]{Geng3} for the geometries $\mathbb{H}^4\times\R$, $\mathbb{H}^2(\mathbb C)\times\R$ and $\widetilde{U(2,1)/U(2)}$. Hence, the following lemma, which is a straightforward generalisation of~\cite[Lemma 5.1]{Neoorder}, tells us that any map $f\colon M\longrightarrow N$ has degree zero. 

\begin{lem}[\cite{Neoorder}]\label{l:factor}
For $i = 1, 2$, let $S^1\to M_i\to B_i$ be circle bundles over closed oriented aspherical manifolds $B_i$ of the same dimension, so that the center of $\pi_1(M_2)$ remains infinite cyclic in finite covers. If $B_1\ngeq B_2$, then $M_1\ngeq M_2$.
\end{lem}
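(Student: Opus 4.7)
The plan is to assume, for contradiction, the existence of a map $f\colon M_1\longrightarrow M_2$ of non-zero degree and to extract from it a map $g\colon B_1\longrightarrow B_2$ of non-zero degree, thereby contradicting the hypothesis $B_1\ngeq B_2$. The guiding idea is that the fiber subgroup of $\pi_1(M_1)$ is central and must map into the center of $\pi_1(M_2)$, which by hypothesis is precisely the fiber subgroup of $\pi_1(M_2)$; this forces $f$ to be (homotopically) a bundle map, and then degree multiplicativity does the rest.

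First I would apply the lifting procedure described above: replace $f$ by a $\pi_1$-surjective lift $\widehat f\colon \widehat M_1 \longrightarrow \widehat M_2$ into a finite cover of $M_2$, and then pass to further finite covers so that (i) the fiber class $t_i\in\pi_1(\widehat M_i)$ of the circle bundle $\widehat M_i\to\widehat B_i$ is central in $\pi_1(\widehat M_i)$ for $i=1,2$ (for $i=1$ at most a 2-fold cover suffices to trivialize the $\pm1$ monodromy of the orientable circle bundle; for $i=2$ this is available by the standing hypothesis on $M_2$, which is preserved under finite covers), and (ii) $C(\pi_1(\widehat M_2))=\langle t_2\rangle\cong\Z$. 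Since $\widehat f$ is $\pi_1$-surjective, $\widehat f_*$ carries the center of $\pi_1(\widehat M_1)$ into $C(\pi_1(\widehat M_2))=\langle t_2\rangle$, so $\widehat f_*(t_1)=t_2^k$ for some integer $k$.

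Next I would descend to the bases. The composition $\widehat p_2\circ\widehat f\colon \widehat M_1\longrightarrow \widehat B_2$ kills $t_1$ on $\pi_1$, since $\widehat p_{2*}(t_2^k)=1$, hence factors through $\pi_1(\widehat M_1)/\langle t_1\rangle=\pi_1(\widehat B_1)$. Asphericity of $\widehat B_2$ realises this algebraic factorization by a continuous map $g\colon \widehat B_1\longrightarrow \widehat B_2$ with $g\circ\widehat p_1\simeq \widehat p_2\circ\widehat f$; in particular $\widehat f$ is freely homotopic to a fiber-preserving map over $g$ whose restriction to an $S^1$-fiber is a degree-$k$ self-map of the circle. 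The standard multiplicativity of mapping degree for bundle maps (equivalently, a brief Gysin/Serre spectral sequence computation) then yields $\deg(\widehat f)=k\cdot\deg(g)$.

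Since $\deg(\widehat f)\neq0$, we conclude $k\neq0$ and $\deg(g)\neq0$, giving $\widehat B_1\geq\widehat B_2$, which contradicts $B_1\ngeq B_2$ (in the intended applications the obstruction passes to finite covers, for example because $\|B_1\|=0$ while $\|B_2\|>0$ is stable under finite covers). The main obstacle I foresee is the transition from the algebraic factorization of $\widehat p_{2*}\circ\widehat f_*$ through $\pi_1(\widehat B_1)$ to the honest bundle map needed for degree multiplicativity, and the rigorous verification of $\deg(\widehat f)=k\cdot\deg(g)$; both steps lean on the asphericity of $\widehat B_2$ (so that maps into it are classified by their $\pi_1$-homomorphisms) and on the hypothesis that the center of $\pi_1(\widehat M_2)$ is exactly the fiber subgroup $\langle t_2\rangle$, which is precisely what is needed to conclude $\widehat f_*(t_1)\in\langle t_2\rangle$.
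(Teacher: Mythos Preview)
The paper does not actually prove this lemma; it is quoted from \cite{Neoorder} as a straightforward generalisation of Lemma~5.1 there. Your outline is the standard argument and is essentially correct, but you introduce an unnecessary step that forces you to hedge at the end.

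You pass to a finite cover $\widehat M_1$ of $M_1$ ``to trivialize the $\pm1$ monodromy'' and make $t_1$ central. This is not needed: since $M_1$ and $B_1$ are both oriented, the circle bundle is orientable, hence principal, and $t_1$ is already central in $\pi_1(M_1)$. Covering the domain is precisely what creates your difficulty at the end, where you only obtain $\widehat B_1\geq\widehat B_2$ and then have to invoke ``in the intended applications the obstruction passes to finite covers'' --- which is not part of the lemma's hypotheses. If instead you keep $M_1$ fixed and lift only on the target side to make $\widetilde f\colon M_1\to\widetilde M_2$ $\pi_1$-surjective, the standing hypothesis gives $C(\pi_1(\widetilde M_2))\cong\Z$; since $t_2$ is central of infinite order and $\pi_1(\widetilde B_2)$ is torsion-free, one checks $C(\pi_1(\widetilde M_2))=\langle t_2\rangle$ exactly. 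Your factorization and degree argument then produce $g\colon B_1\to\widetilde B_2$ with $\deg(\widetilde f)=k\cdot\deg(g)\neq 0$, and composing with the finite covering $\widetilde B_2\to B_2$ gives $B_1\geq B_2$ on the nose, with no caveat required.
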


If $N$ has Kodaira dimension $\frac{3}{2}$, then it is virtually a product $F\times T^2$, where $F$ is a hyperbolic 3-manifold; cf.~\cite[Section 5]{Geng3} or Remark \ref{r:productcover}. Since $\pi_1(M)$ contains $\Z^2$ as a normal subgroup (which is moreover central for the geometries $\mathbb{H}^2\times\R^3$ and $\R^2\times\widetilde{SL_2}$), we deduce that any $\pi_1$-surjective map $f\colon M\longrightarrow N$ factors through a map $\overline f\colon B\longrightarrow F$, where $B$ is a 3-manifold modeled on the geometry $\mathbb{H}^2\times\R$, when $M$ is an $\mathbb{H}^2\times\R^3$-manifold, or on the geometry $\widetilde{SL_2}$, when $M$ is an $\R^2\times\widetilde{SL_2}$- or $\R^2\rtimes\widetilde{SL_2}$-manifold. Hence, $\overline f$ factors through a surface, which implies $\deg(\overline f)=0$. By a statement similar to that of Lemma \ref{l:factor}, we deduce that $\deg(f)=0$.

Let now $M$ be modeled on one of the geometries $Nil^3\times\mathbb H^2$ or $Nil^3\times_\R\widetilde{SL_2}$. In that case, $M$ is virtually a circle bundle over $T^2\times\Sigma_g$, $g\geq2$; cf.~\cite[Prop. 6.23 and Tables 6.24 and 6.29]{Geng3}. Hence, as above, there is no map of non-zero degree from $M$ to any manifold of Kodaira dimension two. Suppose now that the target $N$ has Kodaira dimension $\frac{3}{2}$, i.e., it is virtually a product $F\times T^2$, where $F$ is a hyperbolic 3-manifold, and let $f\colon M\longrightarrow F\times T^2$ be a continuous $\pi_1$-surjective map. Let $\pi\colon F\times T^2\longrightarrow F$ be the projection to the $F$-factor. Since the center of $\pi_1(M)$ is infinite cyclic given by the $S^1$-fiber, the composite map $\pi\circ f\colon M\longrightarrow F$ factors through the bundle projection $p\colon M\longrightarrow T^2\times\Sigma_g$. If $H_3(p)=0$, then $H_3(f)=0$, which means that $\deg(f)=0$, because otherwise $H_3(f)\colon H_3(M)\longrightarrow H_3(F\times T^2;\Q)\neq0$ would be surjective. If $H_3(p)\neq0$ and $\deg(f)\neq0$, then there is an induced map $\overline f\colon T^2\times\Sigma_g\longrightarrow F$, such that $H_3(\overline f)\neq0$. Since 
$$H^3(T^2\times\Sigma_g)\cong(H^2(T^2)\otimes H^1(\Sigma_g))\oplus(H^1(T^2)\otimes H^2(\Sigma_g)),$$ we conclude that there is a map of non-zero degree from $T^3$ or $S^1\times\Sigma_g$ to $F$ (cf.~\cite{Neodegrees,Thom}), which is impossible, as such a map would factor through a surface. Thus $\deg(f)=0$.

Finally, let $M$ be a $Sol^3\times\mathbb H^2$-manifold. We may assume (after passing to a finite cover, if necessary) that $M=E\times\Sigma_g$, where $E$ is a mapping torus of an Anosov diffeomorphism of $T^2$; cf.~\cite[Prop. 6.23 and Table 6.24]{Geng3} and~\cite{Scott}. We first observe that every map from $M$ to a manifold that possesses the geometry $\mathbb H^3\times\R^2$ has degree zero: Indeed, suppose $f\colon E\times\Sigma\longrightarrow F\times T^2$ is a map of non-zero degree, where $F$ is a hyperbolic 3-manifold. Let the composition $\pi\circ f\colon E\times\Sigma\longrightarrow F$, where $\pi\colon F\times T^2\longrightarrow F$ is the projection to the $F$-factor. Since 
$$H^3(E\times\Sigma_g)\cong(H^3(E)\otimes H^0(\Sigma_g))\oplus(H^2(E)\otimes H^1(\Sigma_g))\oplus(H^1(E)\otimes H^2(\Sigma_g)),$$ we conclude by~\cite{Neodegrees,Thom} that there is a map of non-zero degree from $E$ or $S^1\times\Sigma_h$ ($h\geq1$) to $F$, which is a contradiction, as such a map would factor through the circle or a surface respectively. This means that $\deg(f)=0$. Similar arguments apply when the target $N$ is an $\widetilde{SL_2}\times\mathbb H^2$-manifold (see~\cite{Neoorder} and also~\cite{Neodifferent} for a general characterisation regarding such products, as well as~\cite{KN} for projections to the geometry $\widetilde{SL_2}$) or $N$ is modeled on $\mathbb H^2\times\mathbb H^2\times\R$, $\mathbb H^4\times\R$ or $\mathbb H^2(\mathbb C)\times\R$; for the last three geometries note that any non-trivial class in
$$H^4(E\times\Sigma_g)\cong(H^3(E)\otimes H^1(\Sigma_g))\oplus(H^2(E)\otimes H^2(\Sigma_g))$$ is realised either by the product of a $Sol^3$-manifold with the circle or a by the product of the 2-torus with a hyperbolic surface.

We are thus left with the cases where the target $N$ is a virtually a non-trivial circle bundle over a hyperbolic or an $\mathbb H^2\times\mathbb H^2$-manifold. In those cases, we will apply the theory of groups (not) infinite index presentable by products developed in~\cite{NeoIIPP}.

\begin{defn}
A group $H$ is called {\em presentable by products} if there exist two infinite elementwise commuting subgroups $H_1,H_2\subseteq H$, such that the multiplication homomorphism $H_1\times H_2\longrightarrow H$ surjects onto a finite index subgroup of $H$. If both $H_i$ can be chosen with $[H:H_i]=\infty$, then $H$ is called {\em infinite index presentable by products} or {\em IIPP}.
\end{defn}

The property IIPP is a sharp refinement between {\em reducible} groups, i.e., groups that have a finite index subgroup which splits as a direct product of two infinite groups, and groups presentable by products, which were introduced in~\cite{KL}. The following gives a criterion such that the conditions IIPP and reducible are equivalent for central extensions.

\begin{thm}{\normalfont(\cite[Theorem D]{NeoIIPP}).}\label{t:IIPPcriterion}
Let $\Gamma$ be a group with center $C(\Gamma)$ such that the quotient $\Gamma/C(\Gamma)$ is not presentable by products. Then, $\Gamma$ is reducible if and only if it is IIPP.
\end{thm}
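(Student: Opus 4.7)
The plan is to prove the two implications separately. The easier direction—reducibility implies IIPP—is essentially tautological and does not require the hypothesis on $\Gamma/C(\Gamma)$. If a finite-index subgroup $\Gamma_0 \leq \Gamma$ decomposes as a direct product $A \times B$ with both factors infinite, then take $H_1 = A$ and $H_2 = B$, viewed as subgroups of $\Gamma$: they commute elementwise, their product $\Gamma_0$ has finite index in $\Gamma$, and each has infinite index in $\Gamma_0$ (hence in $\Gamma$) because its complementary factor is infinite.

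For the harder direction, assume $\Gamma$ is IIPP, witnessed by $H_1, H_2$. The first step is to pass to the quotient $\bar \Gamma := \Gamma/C(\Gamma)$ and examine the images $\bar H_1, \bar H_2$. They commute elementwise, and their product is the image of the finite-index subgroup $H_1 H_2 \leq \Gamma$, hence of finite index in $\bar \Gamma$. If both $\bar H_i$ were infinite, this would realise $\bar\Gamma$ as presentable by products, contradicting the hypothesis. Therefore at least one image, say $\bar H_1$, is finite. Consequently $Z_1 := H_1 \cap C(\Gamma)$ has finite index in $H_1$, is infinite, and lies in $C(\Gamma)$. Replacing $\Gamma$ by the finite-index subgroup $\Gamma' := Z_1 H_2$, the subgroup $Z_1$ becomes central in $\Gamma'$. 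Setting $K := Z_1 \cap H_2$, a standard index computation gives $[\Gamma' : H_2] = [Z_1 : K]$, and this is infinite since $[\Gamma : H_2] = \infty$ and $[\Gamma : \Gamma']$ is finite.

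The remaining step is to find a subgroup $L \leq Z_1$ with $L \cap K = \{1\}$ and $LK$ of finite index in $Z_1$. Granting such an $L$, the subgroup $LH_2 \leq \Gamma'$ has finite index in $\Gamma'$, and since $L \subseteq Z_1 \subseteq C(\Gamma')$, the factor $L$ commutes with $H_2$; moreover $L \cap H_2 \subseteq L \cap Z_1 \cap H_2 = L \cap K = \{1\}$, so $LH_2 = L \times H_2$ is an internal direct product of two infinite groups, exhibiting reducibility of $\Gamma$. Because $Z_1$ is abelian, the existence of $L$ reduces to a splitting question for abelian groups: in the manifold-theoretic applications $Z_1$ is finitely generated, so one chooses $L$ by splitting $Z_1 \otimes \mathbb{Q}$ as a direct sum of $K \otimes \mathbb{Q}$ and a rational complement, then intersecting with $Z_1$ (discarding the finite torsion contribution if necessary).

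The main obstacle is precisely this complement construction. For non-finitely-generated abelian $Z_1$ no such $L$ need exist—witness $Z_1 = \mathbb{Q}$ with $K = \mathbb{Z}$, where every subgroup trivially meeting $\mathbb{Z}$ is trivial—so one must either invoke finite generation (which holds in all geometric contexts of Section 3) or argue case-by-case through torsion and divisibility. A secondary nuisance is bookkeeping across the passages to finite-index subgroups: at each stage one must verify that the infinite-index conditions of IIPP are preserved, so that the final direct decomposition has two genuinely infinite factors rather than silently absorbing one of them into the other.
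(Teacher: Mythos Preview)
The paper does not supply a proof of this statement; it is quoted from \cite[Theorem~D]{NeoIIPP} and used as a black box in Case~III of the proof of Theorem~\ref{t:monotonicity}. So there is no in-paper argument against which to compare your attempt.

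Your approach is the natural one, and the reduction is carried out correctly: passing to $\bar\Gamma=\Gamma/C(\Gamma)$ forces one image $\bar H_1$ to be finite, whence $Z_1=H_1\cap C(\Gamma)$ is infinite central, $\Gamma'=Z_1H_2$ has finite index, and $[Z_1:K]=[\Gamma':H_2]=\infty$ for $K=Z_1\cap H_2$. The remaining step is exactly the abelian complement problem you isolate.

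You are right to flag that step, and in fact it is not merely a gap in your argument: the theorem as reproduced here, with no finiteness hypothesis on $\Gamma$, is \emph{false}. Your toy obstruction $K=\Z\subset Z_1=\Q$ can be realised globally. Let $\Gamma_0$ be the fundamental group of the unit tangent bundle of $\Sigma_g$, $g\ge2$, with centre $\langle z\rangle\cong\Z$, and set
\[
\Gamma=(\Gamma_0\times\Q)/\langle(z,-1)\rangle.
\]
Then $C(\Gamma)=\Q$ and $\Gamma/C(\Gamma)\cong\pi_1(\Sigma_g)$ is not presentable by products. Taking $H_1=\Q$ and $H_2=\Gamma_0$ shows $\Gamma$ is IIPP, since $\Gamma/\Gamma_0\cong\Q/\Z$ is infinite. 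Yet $\Gamma$ is irreducible: any finite-index $\Gamma'\le\Gamma$ contains $\Q$ as its full centre (as $\Q$ has no proper finite-index subgroups); since $\Q$ admits no nontrivial direct-product decomposition, a splitting $\Gamma'=A\times B$ would force $A=\Q$ and $B\cong\Gamma'/\Q$, contradicting the nonvanishing of the restricted Euler class in $H^2(\Gamma'/\Q;\Q)$.

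Under the additional hypothesis that $C(\Gamma)$ is finitely generated---which holds for every group to which the result is applied in Section~\ref{s:monotonicity}, and which is the setting of \cite{NeoIIPP}---your complement construction via a rational splitting of $Z_1$ goes through and the proof is complete.
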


A prominent class of groups not presentable by products is given by non-elementary hyperbolic groups~\cite{KL}. Hence Theorem \ref{t:IIPPcriterion} applies to the geometry $\widetilde{U(2,1)/U(2)}$, because if $N$ is an $\widetilde{U(2,1)/U(2)}$-manifold, then, up to finite covers, $N$ has the structure of a non-trivial circle bundle over a closed complex hyperbolic 4-manifold $B$. Clearly $\pi_1(N)$ is not reducible, hence, by Theorem  \ref{t:IIPPcriterion}, it is not IIPP. Thus, every map from a $Sol^3\times\mathbb H^2$-manifold to $N$ has degree zero, by the following theorem.

\begin{thm}{\normalfont(\cite[Theorem B]{NeoIIPP}).}\label{t:IIPPdomination}
Let $S^1\to N\to B$ be a circle bundle over a closed oriented aspherical manifold $B$, so that $\pi_1(N)$ is not IIPP and its center remains infinite cyclic in finite covers. Then $P\ngeq N$, for any non-trivial direct product $P$.
\end{thm}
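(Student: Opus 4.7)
The plan is a proof by contradiction. Suppose $f:P=P_1\times P_2\to N$ has non-zero degree, with each $P_i$ closed oriented of positive dimension. After passing to suitable finite covers we may assume $f$ is $\pi_1$-surjective while preserving the product structure on $P$, the circle-bundle structure on $N$, and the infinite-cyclic center hypothesis. Set $H_i:=f_*(\pi_1(P_i))$; these are commuting subgroups with $H_1\cdot H_2=\pi_1(N)$, exhibiting $\pi_1(N)$ as presentable by products. Since $\pi_1(N)$ is not IIPP, one of the $H_i$, say $H_1$, has finite index. (The auxiliary case in which some $\pi_1(P_i)$ is finite is easy: the image is finite in the torsion-free $\pi_1(N)$, hence trivial, so $f$ factors through a lower-dimensional complex.)

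Now $H_2\subseteq Z_{\pi_1(N)}(H_1)$ and $H_2\cap H_1\subseteq Z(H_1)=\Z$, the latter by applying the center hypothesis to the finite cover of $N$ corresponding to $H_1$. Since $[H_2:H_2\cap H_1]<\infty$, the subgroup $H_2$ is virtually infinite cyclic, and after a further lift $f_*(\pi_1(P_2))$ lies in the central $\Z$ of the circle fiber. Composing with the projection $p:N\to B$, the map $g:=p\circ f$ kills $\pi_1(P_2)$, so by asphericity of $B$ it is homotopic to $\alpha\circ\mathrm{pr}_1$ for some $\alpha:P_1\to B$. After straightening $f$ so that $p\circ f=\alpha\circ\mathrm{pr}_1$, the universal property of the pullback gives a factorization $f:P\to\alpha^*N\to N$, where $\alpha^*N$ is the circle bundle over $P_1$ obtained by pulling back $N\to B$ along $\alpha$. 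If $\dim P_2\ge 2$, then $\dim\alpha^*N=\dim P_1+1\le\dim B<\dim N$, so $f$ factors through a space of strictly smaller dimension, forcing $\deg(f)=0$ --- a contradiction.

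The essential case is $\dim P_2=1$, where $\dim\alpha^*N=\dim N$. Here the factor $\hat f:P_1\times S^1\to\alpha^*N$ is a fiber-preserving map over $P_1$ of some constant fiber degree $d$; a short degree calculation gives $\deg(f)=\pm d\cdot\deg(\alpha)$, so $d\ne 0$ and $\alpha$ has non-zero degree. Existence of such $\hat f$ is equivalent to a section of the mapping bundle $\mathrm{Map}_d(S^1,\alpha^*N)\to P_1$, which a standard $U(1)$-transition calculation identifies with $\alpha^*N$ itself as a circle bundle. Hence $\alpha^*N$ admits a section, forcing $\alpha^*e(N/B)=0$ in $H^2(P_1)$ and thus $\alpha^*N\cong P_1\times S^1$. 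Consequently $\pi_1(\alpha^*N)\cong\pi_1(P_1)\times\Z$, whose image in $\pi_1(N)$ is the preimage of $\alpha_*\pi_1(P_1)\subseteq\pi_1(B)$, a finite-index subgroup of $\pi_1(N)$ that splits as $G\times\Z$ with $G=\alpha_*\pi_1(P_1)$. Both factors are infinite and of infinite index in $\pi_1(N)$ (the central $\Z$ because $\pi_1(B)$ is infinite, and $G$ because $\Z$ has infinite index), exhibiting $\pi_1(N)$ as IIPP and contradicting the hypothesis. The main obstacle is precisely this case $\dim P_2=1$: the dimensional shortcut fails, and one must extract the IIPP witness from the triviality of the pulled-back Euler class.
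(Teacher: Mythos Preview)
This theorem is not proved in the present paper; it is quoted verbatim as Theorem~B of \cite{NeoIIPP} and used as a black box in the proof of Theorem~\ref{t:monotonicity}. There is therefore no proof here to compare against, so I assess your argument on its own merits.

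Your overall strategy is sound and the reductions up through the triviality of $\alpha^*N$ are correct. There is, however, a genuine gap in the final step of the case $\dim P_2=1$. From $\alpha^*e(N/B)=0$ in $H^2(P_1;\Z)$ and $\deg(\alpha)\neq 0$ you assert that the finite-index subgroup $p_*^{-1}\bigl(\alpha_*\pi_1(P_1)\bigr)$ of $\pi_1(N)$ \emph{splits} as $G\times\Z$ with $G=\alpha_*\pi_1(P_1)$. This does not follow. Writing $\tilde B\to B$ for the cover with $\pi_1(\tilde B)=\alpha_*\pi_1(P_1)$, the map $\alpha$ factors as $P_1\xrightarrow{\tilde\alpha}\tilde B\to B$, and $\alpha^*e=\tilde\alpha^*\bigl(e|_{\tilde B}\bigr)=0$ together with $\deg(\tilde\alpha)\neq 0$ shows only that $e|_{\tilde B}$ is \emph{torsion} in $H^2(\tilde B;\Z)$, not that it vanishes. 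Equivalently, the section $s\colon P_1\to N$ you build gives a homomorphism $s_*\colon\pi_1(P_1)\to\pi_1(N)$ lifting $\alpha_*$, but this need not descend to a section $\alpha_*\pi_1(P_1)\to\pi_1(N)$, since elements of $\ker(\alpha_*)$ can map under $s_*$ to nontrivial elements of the central $\Z$. So the claimed product decomposition is unjustified.

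The conclusion that $\pi_1(N)$ is reducible (hence IIPP) can still be rescued: a torsion Euler class of order $m$ makes $p_*^{-1}\bigl(\alpha_*\pi_1(P_1)\bigr)$ embed with index $m$ in $\alpha_*\pi_1(P_1)\times\Z$, and any finite-index subgroup $\Gamma$ of a product $Q\times\Z$ contains the finite-index product $\bigl(\Gamma\cap(Q\times\{0\})\bigr)\times\bigl(\Gamma\cap(\{1\}\times\Z)\bigr)$. But this extra step is missing from your write-up.

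One minor remark: the $\mathrm{Map}_d$ bundle computation is heavier than necessary. Since $\hat f$ is fiber-preserving over $P_1$, its restriction to the slice $P_1\times\{\mathrm{pt}\}$ is already a section of $\alpha^*N\to P_1$, which immediately gives $\alpha^*e=0$.
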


The same argument applies when the target $N$ is a non-trivial circle bundle over a 4-manifold $B$ that possesses the irreducible $\mathbb H^2\times\mathbb H^2$ geometry, because $\pi_1(B)$ is not presentable by products and $\pi_1(N)$ is irreducible, and thus not IIPP~\cite{NeoIIPP}.

The criterion of Theorem \ref{t:IIPPcriterion} is not anymore valid once we relax the condition on $\Gamma/C(\Gamma)$ being not presentable by products. Such an example is given by the fundamental group of a $Nil^5$-manifold $N$, which is irreducible and IIPP~\cite[Section 8]{NeoIIPP}, and fits into the central extension
\[
1\longrightarrow\Z\longrightarrow\pi_1(N)\longrightarrow\Z^4\longrightarrow1.
\]
As shown in~\cite[Section 8]{NeoIIPP}, $N$ still does not admit maps of non-zero degree from products. We will show below that a similar argument applies to the case of 
$\widetilde{SL_2}\times_\alpha\widetilde{SL_2}$-manifolds.

Let $N$ be modeled on $\widetilde{SL_2}\times_\alpha\widetilde{SL_2}$, such that, after passing to finite covers, it is a non-trivial $S^1$-bundle over the product of two hyperbolic surfaces $\Sigma_{h_1}\times\Sigma_{h_2}$, and $\pi_1(N)$ fits into the central extension
\[
1\longrightarrow\Z\longrightarrow\pi_1(N)\longrightarrow\pi_1(\Sigma_{h_1})\times\pi_1(\Sigma_{h_2})\longrightarrow1.
\]
Since $N$ is not modeled on $\widetilde{SL_2}\times\mathbb H^2$ or $\mathbb H^2\times\mathbb H^2\times\R$, we conclude that $\pi_1(N)$ is irreducible. However, $\pi_1(N)$ is IIPP, and a presentation is given by the multiplication
\[
H_1\times H_2\longrightarrow\pi_1(N),
\]
where $H_i=\langle a_1,b_1,...,a_{h_i},b_{h_i}, z \ | \ [a_1,b_1]\cdots[a_{h_i},b_{h_i}]=z^{t}, \ t\in\Z\setminus\{0\}\rangle$. 

Suppose, now, that there exists a $\pi_1$-surjective map $f\colon X_1\times X_2\longrightarrow N$ of non-zero degree, where $0<\dim(X_i)<5$. We then obtain a short exact sequence
\begin{eqnarray}\label{eq.domination}
 1 \longrightarrow \Gamma_1\cap\Gamma_2 \longrightarrow \Gamma_1 \times \Gamma_2 \stackrel{\varphi}\longrightarrow\pi_1(N) \longrightarrow 1,
\end{eqnarray}
where $\Gamma_i := \mathrm{im}(\pi_1(f\vert_{X_i})) \subset\pi_1(N)$, \ $\Gamma_1\cap\Gamma_2 \subseteq C(\pi_1(N)) = \Z$ and $\varphi$ is the multiplication homomorphism. Moreover, we obtain two non-trivial rational homology classes
\begin{eqnarray}\label{class}
 \alpha_i := H_{\dim X_i}(B\pi_1(f\vert_{X_i})\circ c_{X_i})([X_i]) \neq 0 \in H_{\dim X_i}(B\Gamma_i;\Q),
\end{eqnarray}
where $c_{X_i}$ denote the classifying maps; see~\cite{KL} or~\cite{NeoIIPP}. Since $\pi_1(N)$ is irreducible, $\Gamma_1\cap\Gamma_2$ is isomorphic to $\Z$. Now both $\Z$ and $\pi_1(N)$ are Poincar\'e Duality groups of cohomological dimension one and five respectively, hence $\Gamma_1\times\Gamma_2$ is a Poincar\'e Duality group of cohomological dimension $\mathrm{cd}(\Gamma_1\times\Gamma_2)=6$ and each $\Gamma_i$ is a Poincar\'e Duality group~\cite{Bi,JW}. We need to examine the cases where $\mathrm{cd}(\Gamma_1)=1,2$ or $3$.

If $\mathrm{cd}(\Gamma_1)=1$, then $\Gamma_1=\Z$, and so 
 $B\Gamma_1 \simeq B\Z =
S^1$. The non-vanishing of $\alpha_1 \in H_{\dim X_1}(S^1;\Q)$
implies that $\dim X_1 \leq 1$, that is, $X_1=S^1$. Hence $S^1\times X_2\geq N$, which is impossible by the following Factorization Lemma.

\begin{lem}{\normalfont(\cite[Lemma 4.8]{NeoIIPP}).}\label{l:factorization}
Let $S^1\to N\to B$ be a non-trivial circle bundle over a closed oriented aspherical manifold $B$. Suppose that the Euler class of $N$ is not torsion and that the center of $\pi_1(N)$ remains infinite cyclic in finite covers. Then $X\times S^1\ngeq N$ for any closed oriented manifold $X$.
\end{lem}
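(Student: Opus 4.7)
The plan is to argue by contradiction. Suppose $f\colon X\times S^1\longrightarrow N$ has non-zero degree; the goal is to exploit the non-torsion Euler class $e\in H^2(B;\Z)$ of the circle bundle $S^1\to N\to B$ to get a contradiction, via a factorization of $f$ through a lower-dimensional map $\bar g\colon X\to B$.

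First I reduce to a convenient situation by passing to finite covers. Replacing $N$ by the finite cover corresponding to the subgroup $f_*\pi_1(X\times S^1)$, I may assume $f$ is $\pi_1$-surjective and still of non-zero degree. The fiber subgroup $\Z\subset\pi_1(N)$ lies in the center; using the hypothesis that $C(\pi_1(N))$ remains infinite cyclic in all finite covers, together with the asphericity of $B$, a further finite cover arranges that the center of $\pi_1(N)$ coincides with the fiber subgroup. This yields a central extension
\[
1\longrightarrow\Z\longrightarrow\pi_1(N)\longrightarrow\pi_1(B)\longrightarrow1
\]
whose kernel is simultaneously the fiber subgroup and the full center.

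Next I construct $\bar g$. The $\Z$ factor of $\pi_1(X\times S^1)=\pi_1(X)\times\Z$ is central, hence by $\pi_1$-surjectivity $f_*(1\times\Z)\subseteq C(\pi_1(N))$, which is the kernel of $\pi_1(N)\to\pi_1(B)$. Writing $\pi\colon N\to B$, $p_1\colon X\times S^1\to X$ for the projections and $g=\pi\circ f$, this says $g_*$ kills the central $\Z$ and factors through $\pi_1(X)$; asphericity of $B$ upgrades this to a homotopy factorization $g\simeq\bar g\circ p_1$ with $\bar g\colon X\to B$. If $n\in\Z$ denotes the integer with which $f_*$ sends $1\times\Z$ into the central $\Z$, a standard regular-value count over a fiber of $\pi$ gives $\deg f=\pm\,n\cdot\deg\bar g$, so $\deg f\neq 0$ forces both $n\neq 0$ and $\deg\bar g\neq 0$.

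Finally I invoke the Euler class. Since $\dim X=\dim N-1=\dim B$ and $\deg\bar g\neq 0$, by Poincar\'e duality $\bar g^*\colon H^*(B;\Q)\to H^*(X;\Q)$ is injective. The Gysin sequence of $S^1\to N\to B$ yields $\pi^*e=0\in H^2(N)$, so
\[
p_1^*\bar g^*e=(\bar g\circ p_1)^*e=g^*e=f^*\pi^*e=0
\]
in $H^2(X\times S^1;\Q)$, and injectivity of $p_1^*$ gives $\bar g^*e=0\in H^2(X;\Q)$. Combined with injectivity of $\bar g^*$ this forces $e=0\in H^2(B;\Q)$, contradicting the non-torsion hypothesis on $e$. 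The most delicate point is the reduction in the first paragraph: the two hypotheses of the lemma are precisely what is needed to arrange ``$\pi_1$-surjective and center equal to fiber'' on a single finite cover, without which the central extension and the factorization $g\simeq\bar g\circ p_1$ that drive the rest of the argument could not be set up.
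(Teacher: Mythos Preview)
The paper does not contain its own proof of this lemma; it is quoted from \cite[Lemma 4.8]{NeoIIPP} and used as a black box inside the proof of Theorem \ref{t:monotonicity}. So there is no argument in the present paper to compare yours against.

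Your argument is essentially correct and follows the natural route (factor through the base, then use the Euler class obstruction). Two minor points worth tightening. First, when you ``replace $N$ by the finite cover'' you tacitly use that the new circle bundle $N'\to B'$ still has non-torsion Euler class, since that is what you contradict in the last paragraph. This is true but deserves a line: if $p\colon B'\to B$ is the induced cover of the base and the fiber subgroup becomes $d\Z$, then $d\cdot e'=p^*e$ in $H^2(B';\Z)$, and $p^*$ is injective on rational cohomology by the transfer, so $e'\neq 0$ in $H^2(B';\Q)$. (Alternatively, compose $\bar g$ with the covering $B'\to B$ and run the final step against the \emph{original} $e$.) Second, the ``further finite cover'' to make the center coincide with the fiber is unnecessary: since $B$ is closed aspherical, $\pi_1(B)$ is torsion-free, so the finite cyclic group $C(\pi_1(N))/(\text{fiber})\subset\pi_1(B)$ is already trivial. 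Neither point affects the validity of the proof.
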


If $\mathrm{cd}(\Gamma_1)=2$, then $\Gamma_1$ is a surface group~\cite{Ec}. Since $\Z=\Gamma_1\cap\Gamma_2\subseteq C(\Gamma_1)$, we conclude that $\Gamma_1\cong\Z^2$. Since $\Z=\Gamma_1\cap\Gamma_2\subseteq C(\Gamma_2)$, we deduce that $\rank C(\Gamma_1\times\Gamma_2)\geq3$. But $\Gamma_1\times\Gamma_2$ fits into the short exact sequence (\ref{eq.domination}), where $C(\Gamma_1\cap\Gamma_2)=C(\pi_1(N))=\Z$. This gives us a contradiction~\cite[Lemma 6.23]{NeoIIPP}.

The last case is $\mathrm{cd}(\Gamma_i)=3$. Since $\Gamma_i$ are Poincar\'e Duality groups and $C(\Gamma_i)\neq1$, we deduce that $\Gamma_i$ must be fundamental groups of closed 3-manifolds modeled on $\R^3$, $Nil^3$, $\mathbb H^2\times\R$ or $\widetilde{SL_2}$ by theorems of Bowditch~\cite{Bow} and Thomas~\cite{Tho}. The geometry $\R^3$ is excluded due to the rank of the center as above. Hence $B\Gamma_i$ are realised by closed manifolds. (Note that at least one of them must be modeled on $\mathbb H^2\times\R$ or $\widetilde{SL_2}$, because $\pi_1(N)$ is not nilpotent~\cite{NeoIIPP}.) We have shown that there are two non-trivial homology classes $\alpha_i\in H_{\dim X_i}(B\Gamma_i;\Q)$ such that
\begin{eqnarray*}\label{degree}
H_5(B\varphi)(\alpha_1 \times \alpha_2)=  \deg(f)[N].
\end{eqnarray*}
For one of the $\alpha_i$, say $\alpha_1$, we have by (\ref{class}) a continuous map
\[
B\pi_1(f\vert_{X_1})\circ c_{X_1}\colon X_1\longrightarrow B\Gamma_1,
\]
where in our case $X_1=E$ is a $Sol^3$-manifold and $B\Gamma_1$ is realised by a 3-manifold modeled on $Nil^3$,  $\mathbb H^2\times\R$ or $\widetilde{SL_2}$. Since $\alpha_1\neq0$, the above map is non-trivial in degree three homology. This is a contradiction because, by the growth of first Betti number (see for example~\cite{Scott}), there are no maps of non-zero degree from a $Sol^3$-manifold to any 3-manifold possessing one of the geometries $Nil^3$,  $\mathbb H^2\times\R$ or $\widetilde{SL_2}$. Therefore $\deg(f)=0$ as claimed.

\subsubsection*{Case IV: $\kappa^g(M)=\frac{3}{2}$}

Let $M$ be a manifold modeled on $\mathbb H^3\times\R$. We can assume that $M$ is the product of a hyperbolic 3-manifold $F$ and the 2-torus. In particular, $\Z\subset\Z^2=C(\pi_1(M))$. Suppose $f\colon M\longrightarrow N$ is a $\pi_1$-surjective map, where $N$ is a manifold of Kodaira dimension two. In all cases, we can assume that $N$ is a circle bundle whose base $B$ is modeled on one of the geometries $\mathbb H^4$, $\mathbb H^2(\mathbb C)$ or $\mathbb H^2\times\mathbb H^2$. In particular, $C(\pi_1(N))=\Z$. Hence $f_*$ factors through a surjection $\overline{f_*}\colon\pi_1(M)/\Z\longrightarrow\pi_1(B)$, where $\pi_1(M)/\Z$ is realised by $S^1\times F$. Since $S^1\times F\ngeq B$, Lemma \ref{l:factor} implies that $\deg(f)=0$.

\subsubsection*{Case V: $\kappa^g(N)=\frac{5}{2}$}

In this case, $N$ is modeled on one of the geometries $\mathbb{H}^5$, $SL(3,\R)/SO(3)$  or  $\mathbb H^3\times\mathbb H^2$, which implies $\|N\|>0$. This is a consequence of Gromov's theorems~\cite{Gromov} for hyperbolic 5-manifolds and products of hyperbolic 2- and 3-manifolds, by the inequality
\[
\|E_1\times E_2\|\geq\|E_1\|\|E_2\|,
\]
and a consequence of a theorem of Bucher~\cite{Bucher} for $SL(3,\R)/SO(3)$. On the other hand, any manifold $M$ with Kodaira dimension $-\infty,0,1,\frac{1}{2}$ or $2$ has zero simplicial volume. For if $M$ is modeled on a compact geometry, then the classifying space of $\pi_1(M)$ has virtual dimension less than five (see also Case I) and thus Gromov's Mapping theorem~\cite{Gromov} tells us that $\|M\|=0$. If $M$ is virtually a fiber bundle with amenable fiber, then $\|M\|=0$ again by Gromov~\cite{Gromov}. For any manifold which is virtually a product with a factor that belongs in the above cases (and thus has zero simplicial volume), it has zero simplicial volume by Gromov's inequality
\[
\|E_1\times E_2\|\leq\begin{pmatrix}
\dim(E_1\times E_2)\\
\dim(E_1)
\end{pmatrix}\|E_1\|\|E_2\|.
\]
In the remaining cases, $M$ virtually fibers over a compact geometry and thus $\|M\|=0$ again by the Mapping theorem~\cite{Gromov}. By $\|N\|>0$ and $\|M\|=0$ we conclude that $M\ngeq N$; cf. inequality (\ref{eq.functorial}).

\medskip

The proof is now complete.
\end{proof}

\begin{rem}
Note that Case V proves in particular Theorem \ref{t:topKodGromov}.
\end{rem}

\section{K\"ahler manifolds with non-vanishing simplicial volume}\label{s:Gromovnorm}

In this section, we prove Theorem \ref{t:posKod}, giving, in particular, a complete answer to Question \ref{topKod0norm}(1) for K\"ahler $3$-folds. In fact, we will present a uniform treatment for all dimensions, and, then,  known results in algebraic geometry will imply an affirmative answer for K\"ahler $3$-folds. 
This gives further evidence for the compatibility of our axiomatic Kodaira dimension with other existing Kodaira dimensions for manifolds with non-zero simplicial volume. 

We start with a lemma which shows that the simplicial volume is a birational invariant.

\begin{lem}\label{birgn}
Birationally equivalent smooth projective varieties (resp. bimeromorphic smooth K\"ahler manifolds) have the same simplicial volume.
\end{lem}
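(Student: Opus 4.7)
The plan is to reduce, via the Weak Factorization Theorem, to proving invariance of the simplicial volume under a single blowup along a smooth center, and then to handle that case by exploiting the structure of the exceptional divisor together with Gromov's amenability principle.

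By Weak Factorization in the projective category (Abramovich-Karu-Matsuki-W\l odarczyk), together with its bimeromorphic analogue in the smooth compact K\"ahler category, every birational (resp.\ bimeromorphic) equivalence between smooth projective varieties (resp.\ smooth compact K\"ahler manifolds) of the same dimension factors as a finite alternating sequence of smooth blowups and blowdowns along smooth closed centers. It therefore suffices to prove $\|\widetilde X\| = \|X\|$ whenever $\pi\colon \widetilde X \to X$ is a single blowup of a closed K\"ahler (or projective) manifold $X$ along a smooth closed submanifold $Z \subset X$ of codimension $r \geq 2$. The exceptional divisor $E = \mathbb{P}(N_{Z/X})$ is then a $\CP^{r-1}$-bundle over $Z$, and $\pi$ restricts to a diffeomorphism $\widetilde X \setminus E \to X \setminus Z$.

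One direction is free: $\pi$ is a degree-one map between closed oriented manifolds of the same dimension, so the functoriality inequality (\ref{eq.functorial}) yields $\|\widetilde X\| \geq \|X\|$. For the reverse inequality $\|\widetilde X\| \leq \|X\|$, the crucial point is that the locus where $\pi$ fails to be a diffeomorphism --- the $\CP^{r-1}$-bundle $E \to Z$ --- has simply connected (hence amenable) fibers. Removing small open tubular neighborhoods of $E$ and $Z$ produces compact manifolds with boundary $(W, \partial W)$ and $(X \setminus U, \partial U)$ that are identified by $\pi$, whose boundaries are $S^{2r-1}$-bundles over $Z$ with amenable fiber. Applying Gromov's equality of absolute and relative simplicial volumes for pairs with amenable boundary fundamental group, combined with the additivity of simplicial volume along amenable submanifolds, then gives $\|\widetilde X\| = \|(W,\partial W)\| = \|(X \setminus U, \partial U)\| = \|X\|$.

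The main obstacle is this last step, which demands delicate control of relative simplicial volumes and of the gluing along the amenable boundary (in particular, rigorous use of the Bucher-Frigerio/Kuessner type gluing for relative Gromov norms). A conceptually cleaner alternative is to begin with an almost-minimal fundamental cycle of $X$, perturb it to be transverse to $Z$, pull it back via $\pi$, and correct the result by a small cycle supported in a tubular neighborhood of $E$: because the fibers of both $E$ and the tubular neighborhood are simply connected, Gromov's vanishing theorem for bundles with amenable fibers (through his Mapping Theorem applied to the collapse of the neighborhood onto $Z$) shows that the $\ell^1$-norm of the correction tends to zero, yielding $\|\widetilde X\| \leq \|X\|$.
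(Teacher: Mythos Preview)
Your reduction via the Weak Factorization Theorem to a single smooth blowup is exactly the paper's opening move. Where you diverge is in handling the single blowup $\pi\colon \widetilde X \to X$: you set up a two-sided inequality, invoke relative simplicial volumes, gluing along amenable boundaries, and a cycle-correction argument --- and you yourself flag the gluing step as an ``obstacle'' requiring delicate control that you do not actually carry out.

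The paper's argument for the blowup step is a one-liner: the map $\pi$ induces an isomorphism on fundamental groups (every fiber is a point or a $\CP^{r-1}$, hence simply connected), so Gromov's Mapping Theorem gives $\|\widetilde X\| = \|X\|$ directly. No relative norms, no gluing, no transversality or correction cycles.

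Your route is not wrong --- the amenability of the $S^{2r-1}$ and $\CP^{r-1}$ fibers is precisely what would make the gluing and correction arguments go through, and this is the same amenability that underlies the Mapping Theorem --- but you are effectively reproving a special case of that theorem by hand while simultaneously invoking it for auxiliary purposes. Once you are willing to cite the Mapping Theorem at all (as you do in your alternative sketch), you may as well apply it globally to $\pi$ itself and be done. The missed observation is simply that $\pi_*\colon \pi_1(\widetilde X)\to\pi_1(X)$ is an isomorphism.
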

\begin{proof}
The Mapping theorem of~\cite{Gromov} implies that if there is a continuous map $f\colon X_1\rightarrow X_2$ such that the induced homomorphism of fundamental groups is an isomorphism, then $\|X_1\|=\|X_2\|$. In particular, it applies when $f$ is a blowup. By the weak factorization theorem~\cite{AKMW}, any bimeromorphic map between complex manifolds can be factored as a composition of blowups and blowdowns at a smooth center, and each intermediate variety is a complex manifold. Moreover, if we start with a birational map between smooth projective varieties, then each intermediate variety is a smooth projective variety. Hence, birationally equivalent smooth projective varieties, resp. bimeromorphic K\"ahler manifolds, have the same simplicial volume.
\end{proof}

We first deal with uniruled manifolds. 

\begin{prop}\label{uniruledgn0}
Any uniruled manifold has vanishing simplicial volume. 
\end{prop}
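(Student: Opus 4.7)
The plan is to combine the just-established birational invariance of simplicial volume (Lemma \ref{birgn}) with Gromov's product inequality and the functoriality \eqref{eq.functorial} of the simplicial volume under maps of non-zero degree. The underlying idea is straightforward: a uniruled variety is, essentially by definition, dominated by a product of the form $Y \times \mathbb{P}^1$, and any such product has vanishing simplicial volume because $\|\mathbb{P}^1\|=\|S^2\|=0$.

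In more detail, I would proceed in three steps. First, by the definition of uniruledness, there exist a smooth projective variety $Y$ with $\dim Y = n-1$ (we may always pass to a resolution without affecting uniruledness) and a dominant rational map $\phi\colon Y \times \mathbb{P}^1 \dashrightarrow X$ of some nonzero degree $d$. Second, resolving the indeterminacy locus of $\phi$ by Hironaka yields a smooth projective variety $Z$ together with a birational morphism $\sigma\colon Z \to Y \times \mathbb{P}^1$ and a surjective morphism $\pi\colon Z \to X$ of degree $d$; by Lemma \ref{birgn},
$$
\|Z\| = \|Y \times \mathbb{P}^1\|.
$$
Third, Gromov's product inequality combined with $\|\mathbb{P}^1\|=0$ gives
$$
\|Y \times \mathbb{P}^1\| \leq \binom{2n}{2}\, \|Y\|\cdot \|\mathbb{P}^1\| = 0,
$$
so $\|Z\|=0$, and the functorial inequality $\|Z\|\geq |d|\cdot\|X\|$ forces $\|X\|=0$.

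There is no substantial obstacle: the proof is essentially an assembly of Lemma \ref{birgn} with two classical theorems of Gromov. The one point requiring care is the resolution step, where one must ensure that $\pi$ remains surjective and of the \emph{same} nonzero degree $d$ after blowing up the indeterminacy locus of $\phi$; this is automatic since blowing up alters neither the generic behaviour of $\phi$ nor the topological degree on a Zariski open set where $\phi$ is already defined. For the K\"ahler (or more generally Fujiki class $\mathcal{C}$) version needed in Theorem \ref{t:posKod}, the same argument applies verbatim via a bimeromorphic resolution of indeterminacies together with the corresponding bimeromorphic clause of Lemma \ref{birgn}.
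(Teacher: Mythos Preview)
Your proof is correct and follows essentially the same approach as the paper's: pass from the dominant rational map $Y\times\mathbb{P}^1\dashrightarrow X$ to a genuine morphism via Hironaka, use the product inequality (with $\|\mathbb{P}^1\|=0$) to kill the simplicial volume of $Y\times\mathbb{P}^1$, and then push this down to $X$ via the functorial inequality~\eqref{eq.functorial}. The only cosmetic difference is that you invoke Lemma~\ref{birgn} to equate $\|Z\|$ with $\|Y\times\mathbb{P}^1\|$, whereas the paper appeals directly to Gromov's Mapping Theorem---but since Lemma~\ref{birgn} is itself proved via the Mapping Theorem, this amounts to the same thing.
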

\begin{proof}
For a uniruled $n$-fold $X$, there is a complex $(n-1)$-fold $Y$ and a dominant and generically finite rational map $f\colon Y\times \CP^1\dashrightarrow X$. Up to blowups, we can choose $Y$ to be smooth. 
 
By Hironaka's resolution of singularities~\cite{Hironaka}, there is a birational morphism $g\colon Z\rightarrow Y\times \CP^1$, obtained as the composition of blowups along smooth centers,  such that $f\circ g$ is a morphism. 

Since $\CP^1\cong S^2$, the product inequality for the simplicial volume implies $\|Y\times \CP^1\|=0$ (or by~\cite{Gromov,Yano} due to the circle action).
By the Mapping theorem of~\cite{Gromov}, we have $||Z\|=\|Y\times \CP^1\|=0$. Finally, since $\|Z\|\geq|\deg(f\circ g)|\|X\|\geq\|X\|$, we conclude that $\|X\|=0$.
\end{proof}

Apparently, any uniruled manifold has holomorphic Kodaira dimension $\kappa^h=-\infty$. The converse is one of the major open problems, often attributed to Mumford, in the classification theory of projective manifolds (see e.g.~\cite{BDPP}).

\begin{con}[Mumford]\label{uniruled}
A smooth projective variety with $\kappa^h=-\infty$ is uniruled.
\end{con}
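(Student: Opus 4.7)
The plan is to reduce Mumford's conjecture to the non-vanishing conjecture from the Minimal Model Program, using the Boucksom-Demailly-P\u{a}un-Peternell (BDPP) characterization of uniruledness. By BDPP, a smooth projective $n$-fold $X$ is uniruled if and only if $K_X$ is not pseudo-effective, equivalently, $-K_X$ lies in the closure of the cone of movable curves. So it suffices to prove the contrapositive: if $K_X$ is pseudo-effective, then some pluricanonical space $H^0(X,mK_X)$ with $m>0$ is non-zero, which would force $\kappa^h(X)\geq 0$ against the hypothesis $\kappa^h(X)=-\infty$.

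To establish this non-vanishing step I would run the MMP with scaling on $X$. Assuming termination of flips, the program ends either with a Mori fiber space (in which case $X$ is uniruled by construction, and we are done directly), or with a minimal model $X'$ birational to $X$ on which $K_{X'}$ is nef. Since $\kappa^h$ is a birational invariant, it suffices to produce a pluricanonical section on $X'$; but this is exactly the non-vanishing part of the abundance conjecture. An alternative, more analytic route is to represent $c_1(K_X)$ by a positive closed current with controlled singularities (after a modification) and apply Ohsawa-Takegoshi type $L^2$-extension, or Siu's deformation invariance of plurigenera, to construct a pluricanonical section directly. In dimensions $\leq 3$ both termination and non-vanishing are known, via Miyaoka's generic semipositivity of $\Omega^1_X$ together with the completed MMP and abundance theorem for 3-folds (Miyaoka, Mori, Kawamata, Reid, Shokurov, Koll\'ar), and the conjecture is settled there; Miyaoka-Mori's bend-and-break can even be used as a shortcut.

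The main obstacle is the non-vanishing step in dimension $\geq 4$: even granting termination of flips, producing a single section of some $mK_{X'}$ on a minimal model with $K_{X'}$ nef is the hardest open part of the abundance conjecture. Analytic constructions struggle to control singularities of the potential well enough for $L^2$-extension to yield a global section, and algebraic constructions via the MMP cannot presently reach past $n=3$ (with partial progress in $n=4$ by Kawamata and others). A complete proof for all dimensions would therefore require either a genuinely new input toward abundance, or a way to bypass it entirely — for instance, a direct deformation-theoretic argument that produces a rational curve through a general point of $X$ from the assumption $\kappa^h(X)=-\infty$ without first passing to a minimal model.
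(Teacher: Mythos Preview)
The statement you are attempting to prove is labeled \emph{Conjecture} in the paper, and the paper does not provide a proof of it. Immediately after the statement, the authors only remark that it is known for projective $3$-folds (citing Mori) and that in general it would follow from the Abundance conjecture. It is used in the paper purely as a conditional hypothesis: Theorem~\ref{Kod>0} is proved \emph{up to} Conjectures~\ref{uniruled} and~\ref{kod0ame}, and the unconditional results (Corollary~\ref{posKod}, Theorem~\ref{Kah3}) are obtained only in the range of dimensions where these conjectures are already theorems.

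Your proposal is therefore not comparable to a proof in the paper, since none exists. That said, your analysis is accurate and in fact lands exactly where the paper does: via BDPP the question reduces to non-vanishing for pseudo-effective $K_X$, which is the abundance-type input the paper also invokes, and you correctly note that this is settled only for $n\le 3$. You are also right that the residual difficulty in higher dimensions is precisely the non-vanishing step on a minimal model, which remains open. So your write-up is a reasonable survey of the state of the art, but it is not a proof, and you should not present it as one; the honest conclusion is that Mumford's conjecture remains open in dimension $\ge 4$, consistent with how the paper treats it.
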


This is known to be true for projective $3$-folds~\cite{Mo}. In general, it follows from the Abundance conjecture, which says that the Kodaira dimension agrees with the numerical Kodaira dimension~\cite{KM}. 

The key for the vanishing of the simplicial volume for smooth projective varieties with $0\le \kappa^h(M)\le n-1$ is the case of $\kappa^h=0$. We have the following conjecture (see for example~\cite[(4.1.6)]{Kol95}).

\begin{con}[Koll\'ar]\label{kod0ame}
Let $X$ be a smooth and proper variety with $\kappa^h(X)=0$. Then $X$ has a finite \'etale cover $X'$ such that $X'$ is birational to the product of an Abelian variety and of a simply connected variety with $\kappa^h=0$. In particular, $\pi_1(X)$ has a finite index Abelian subgroup. 
\end{con}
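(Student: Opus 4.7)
The plan is to combine three major structural tools from algebraic geometry: the Minimal Model Program (MMP), the Abundance Conjecture, and the Beauville--Bogomolov Decomposition theorem. Since $\kappa^h$ and the birational nature of the target statement are both invariant under birational equivalence (and $\pi_1$ is a birational invariant of smooth projective varieties in characteristic zero), I would first run the MMP on $X$ to reduce to a minimal model, i.e., to replace $X$ by a variety whose canonical divisor $K_X$ is nef. Assuming MMP terminates, the Abundance Conjecture then forces $K_X$ to be torsion in the Picard group, because $\kappa^h(X)=0$ combined with $K_X$ nef predicts $K_X\equiv 0$.

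Once $K_X\equiv 0$, a finite \'etale cover $\pi \colon X' \to X$ actually trivializes the canonical bundle, yielding a smooth proper variety $X'$ with $K_{X'}\simeq \mathcal{O}_{X'}$, in particular with vanishing real first Chern class. To this $X'$, I would apply the Beauville--Bogomolov Decomposition: after a further finite \'etale cover, $X'$ splits as a product of a complex torus $T$, finitely many simply connected strict Calabi--Yau factors, and finitely many simply connected irreducible hyperk\"ahler factors. Grouping the simply connected factors together as a single factor $Y$, one obtains a splitting $X'\cong T\times Y$ with $\kappa^h(Y)=0$ (each factor has trivial canonical bundle and $Y$ is simply connected). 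In the projective setting $T$ is automatically polarized, so $T=A$ is an abelian variety, giving exactly the stated conclusion.

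The assertion about fundamental groups is then immediate: from the product structure one reads off $\pi_1(X')\cong \pi_1(A)\times \pi_1(Y)\cong \Z^{2\dim A}$, which is abelian, and this is a finite index subgroup of $\pi_1(X)$ since $\pi$ is finite \'etale.

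The main obstacle is the Abundance Conjecture itself, which remains open for $\dim X\geq 4$. Without abundance, one can only conclude via~\cite{BDPP} that $K_X$ is pseudo-effective with vanishing numerical Kodaira dimension, and this is not yet known to imply the torsion of $K_X$ needed to pass to a cover with trivial canonical bundle. In dimension three the abundance theorem is a classical result of Miyaoka and Kawamata, and MMP terminates, so the whole strategy is unconditional there; this is precisely what is used to derive part (2) of Theorem~\ref{t:posKod} from Proposition~\ref{uniruledgn0}. The Beauville--Bogomolov step, by contrast, is a robust structure theorem once $c_1(X')_{\R}=0$ is established, so all of the difficulty is concentrated on the passage from $\kappa^h=0$ to numerical triviality of the canonical class.
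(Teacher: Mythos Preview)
The statement you are addressing is recorded in the paper as a \emph{Conjecture} (attributed to Koll\'ar), not as a theorem; the paper offers no proof and explicitly treats it as open input to Theorem~\ref{Kod>0}, noting only that the dimension $\leq 3$ case is established in~\cite{Kol95}. So there is no ``paper's own proof'' to compare against, and your write-up should not be read as a proof but as a heuristic reduction to other conjectures --- which, to your credit, you acknowledge in the final paragraph.

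As a strategy, what you sketch (MMP $\Rightarrow$ $K$ nef; Abundance $\Rightarrow$ $K$ torsion; index-one cover $\Rightarrow$ $K$ trivial; Beauville--Bogomolov $\Rightarrow$ product splitting) is indeed the expected route, and it matches how the $3$-fold case is handled in the literature the paper cites. One genuine technical gap worth flagging: the output of the MMP is in general a \emph{singular} terminal (or klt) variety, so when you write ``yielding a smooth proper variety $X'$'' after the index-one cover, that is not automatic --- the cover of a singular minimal model is still singular, and the classical Beauville--Bogomolov theorem requires smoothness. One must either invoke the recent singular Beauville--Bogomolov decomposition for klt varieties with numerically trivial canonical class, or argue more carefully via a crepant resolution. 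This does not affect the $\pi_1$ conclusion in dimension $3$ (which is what the paper actually uses), but it is a real issue in your general outline.
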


In particular, an affirmative solution to Conjecture \ref{kod0ame} would imply that such $X$ has amenable (virtually Abelian) fundamental group and thus $\|X\|=0$. In the following, we show that any smooth projective $n$-fold with non-vanishing simplicial volume must have $\kappa^h=n$, up to the above two well known conjectures.

\begin{thm}\label{Kod>0}
Up to (the second part of) Conjecture \ref{kod0ame}, any smooth $2n$-dimensional complex projective variety $M$ with $\kappa^h(M)\geq 0$ and $\|M\|>0$ has $\kappa^h(M)=n$. 

In particular, assuming Conjecture \ref{uniruled}, 
then any smooth projective variety with non-vanishing simplicial volume 
is of general type.
\end{thm}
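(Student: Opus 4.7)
My plan is to prove the contrapositive: if $0 \leq \kappa^h(M) < n$, then $\|M\| = 0$, contradicting the hypothesis $\|M\|>0$. The ``in particular'' statement then follows by combining this with Conjecture \ref{uniruled} (ruling out $\kappa^h = -\infty$) and Proposition \ref{uniruledgn0} (which shows $\|M\|=0$ in the uniruled case). Throughout, I would invoke Lemma \ref{birgn} freely to replace $M$ by any birationally equivalent smooth projective model. The easy case is $\kappa^h(M) = 0$: the second part of Conjecture \ref{kod0ame} gives that $\pi_1(M)$ is virtually Abelian and hence amenable, so Gromov's classical vanishing theorem for closed manifolds with amenable fundamental group immediately yields $\|M\|=0$.

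For $0 < \kappa^h(M) = k < n$, I would invoke the Iitaka fibration: after a birational modification (which does not change $\|M\|$), we obtain a surjective morphism $f \colon M \to Y$ with connected fibers, where $Y$ is smooth projective of complex dimension $k$ and the general fiber $F$ is smooth with $\kappa^h(F) = 0$. Applying Conjecture \ref{kod0ame} to $F$ yields that $\pi_1(F)$ is virtually Abelian, hence amenable. Standard results on fundamental groups of algebraic fibrations then give that the image of $\pi_1(F) \to \pi_1(M)$ is, up to finite index, a normal amenable subgroup $A \subseteq \pi_1(M)$, with quotient $\pi_1(M)/A$ commensurable to $\pi_1(Y)$. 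By Gromov's Mapping Theorem, the projection $B\pi_1(M) \to B(\pi_1(M)/A)$ is isometric on the $\ell^1$-seminorm of singular homology, so $\|M\|$ equals the $\ell^1$-seminorm of the image of $[M]$ in $H_{2n}(B(\pi_1(M)/A);\R)$. However, that image factors (up to homotopy, after passing to a finite étale cover if needed) through $Y$ itself, and since $\dim_\R Y = 2k < 2n$, it vanishes for dimensional reasons. Hence $\|M\|=0$, giving the desired contradiction.

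The main technical obstacle is the $\pi_1$-bookkeeping in the second case: the Iitaka fibration is only a proper surjective morphism with possibly singular and multiple fibers, so the homotopy long exact sequence of a smooth fiber bundle does not apply directly. To extract a normal amenable subgroup of $\pi_1(M)$ from amenability of $\pi_1(F)$, I would rely on algebro-geometric results on $\pi_1$ of algebraic fibrations (in the spirit of Nori and Koll\'ar). A cleaner alternative, avoiding this bookkeeping altogether, is to invoke Gromov's amenable cover vanishing theorem: cover $Y$ by small open sets of multiplicity at most $2k+1 < 2n$ whose preimages in $M$ have amenable fundamental-group image---near regular values, these preimages are homotopy equivalent to $F$, and singular fibers can be handled by refining the cover---which yields $\|M\|=0$ directly.
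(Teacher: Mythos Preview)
Your alternative approach via Gromov's amenable-cover vanishing is essentially the route the paper takes, but you have left the crucial step unproved. The phrase ``singular fibers can be handled by refining the cover'' is not an argument: shrinking a neighbourhood $U$ of a critical value $y\in Y$ does nothing by itself to control $\pi_1(f^{-1}(U))$, which a priori could be non-amenable no matter how small $U$ is. What the paper invokes here is a specific theorem of Koll\'ar (Theorem~\ref{Kol2.12}, i.e.\ \cite[Theorem 2.12]{Kol95}): under a pointwise fibre-dimension hypothesis satisfied by the Iitaka fibration, for every $y\in Y$ there is a neighbourhood $U$ such that $\im[\pi_1(X_g)\to\pi_1(f^{-1}(U))]$ has finite index in $\pi_1(f^{-1}(U))$. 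Since $\pi_1(X_g)$ is amenable by Conjecture~\ref{kod0ame}, this forces $\pi_1(f^{-1}(U))$ to be amenable, and then Gromov's vanishing applies. You gesture at ``Nori and Koll\'ar'' in your first approach, but this is exactly the input needed in the second approach; without it the argument is incomplete.

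Your first approach has a more structural gap. You claim the image $A$ of $\pi_1(F)\to\pi_1(M)$ is normal with $\pi_1(M)/A$ commensurable to $\pi_1(Y)$, and that the map $M\to B(\pi_1(M)/A)$ factors through $Y$. But over the discriminant locus the kernel of $\pi_1(M)\twoheadrightarrow\pi_1(Y)$ can be strictly larger than $A$ (loops in $Y^0$ around the discriminant may become trivial in $\pi_1(Y)$ but not lift to $A$), so $\pi_1(M)/A$ is in general only a quotient of $\pi_1(Y^0)$, not of $\pi_1(Y)$, and there is no obvious map to a $2k$-dimensional space through which to factor. One can salvage this by instead quotienting by $K=\ker(\pi_1(M)\to\pi_1(Y))$ and showing $K$ is amenable, but proving that again requires exactly Koll\'ar's theorem. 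In short: both of your approaches converge on the same missing ingredient, which the paper supplies explicitly.
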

\begin{proof}
When $\kappa^h(M)>0$, then we know that $M$ admits an Iitaka fibration. Precisely, $M$ is birationally equivalent to a projective manifold $X$ which admits an algebraic fiber space structure $\phi\colon X\rightarrow Y$ over a normal projective variety $Y$ such that the Kodaira dimension of a very general fiber of $\phi$ has Kodaira dimension zero. By Lemma \ref{birgn}, we conclude 
 $\|M\|=\|X\|$. 

We recall a vanishing result which is a corollary of the Mapping Theorem~\cite{Gromov}: If a closed manifold $X$ can be mapped into a topological space $Y$ whose covering dimension $\dim Y<\dim X$, such that the pullback of every point in $Y$ has an amenable neighborhood in $X$, then $\|X\|=0$. In our situation, $Y$ is a normal variety, which could be blown up to a smooth projective variety $Y'$. Since the blowup map is holomorphic, it is an open map by the Open Mapping theorem in complex analysis. Moreover, we know that the smooth manifold $Y'$ has covering dimension $\dim Y$, by sending open subsets to $Y$ through the surjective birational morphism. 

Hence, in our setting, the problem is reduced to showing that every fiber of an Iitaka fibration has a neighborhood whose fundamental group is amenable.

A general fiber of an Iitaka fibration is a smooth projective variety of Kodaira dimension zero, and therefore, by (the second part of) Conjecture \ref{kod0ame}, it has amenable fundamental group. Thus any regular fiber has a product neighborhood which has the same fundamental group as the fiber, thus amenable.

When $1\le k=\kappa^h(M)<n$, we know $\dim_{\mathbb C} Y=k$. We are in the setting of~\cite[Theorem 2.12]{Kol95}, which we recall below for the convenience of the reader.

\begin{thm}\label{Kol2.12}
Let $X$ and $Y$ be irreducible normal complex spaces and $f\colon X\rightarrow Y$ a morphism. Assume that there is a Zariski open dense set $Y^0\subset Y$ such that $f\colon X^0:=f^{-1}(Y^0)\rightarrow Y^0$ is a topological fiber bundle with connected fiber $X_g$. Let $y\in Y$ be a point such that there is an $x\in f^{-1}(y)$ satisfying $\dim_xf^{-1}(y)=\dim X-\dim Y$. Then \begin{enumerate}
\item there is an open neighborhood $y\in U\subset Y$ such that $\im[\pi_1(X_g)\rightarrow \pi_1(f^{-1}(U))]$ has finite index in $\pi_1(f^{-1}(U))$;
\item if $f$ is proper, then $\im[\pi_1(X_g)\rightarrow \pi_1(f^{-1}(y))]$ has finite index in $\pi_1(f^{-1}(y))$;
\item if $f$ is smooth at $x$, then $\im[\pi_1(X_g)\rightarrow \pi_1(f^{-1}(U))]$  is surjective. 
\end{enumerate}
\end{thm}

In our case, we can apply Theorem \ref{Kol2.12}(1). Since  the fundamental group of a general fiber is amenable, it implies that we can find a neighborhood $U$ of $p\in Y$, such that $\pi_1(f^{-1}(U))$ is amenable (in fact, by Theorem \ref{Kol2.12}(2), we also know that any fiber of an Iitaka fibration is amenable). Hence $\|M\|=\|X\|=0$ by the above mentioned vanishing result of~\cite{Gromov}. 
\end{proof}

In dimension no greater than $3$, both Conjectures \ref{uniruled} and \ref{kod0ame} 
are known to be true, by~\cite{Mo} and~\cite{Kol95} respectively. Hence, we have the following. 

\begin{cor}\label{posKod} 
Let $M$ be a smooth complex projective $n$-fold with non-vanishing simplicial volume. Then $\kappa^h(M)$ cannot be $n-1, n-2$ or $n-3$. If, moreover, $n=3$, then $\kappa^h(M)=3$.
\end{cor}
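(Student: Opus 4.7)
The plan is to deduce the corollary directly from Theorem \ref{Kod>0} and Proposition \ref{uniruledgn0}, by noting that in the dimensions relevant here both Mumford's Conjecture \ref{uniruled} and Koll\'ar's Conjecture \ref{kod0ame} are in fact proven theorems, so that the conditional conclusions of Theorem \ref{Kod>0} become unconditional.

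For the first assertion, I would argue as follows. Suppose $\kappa^h(M)=n-k$ with $k\in\{1,2,3\}$, so in particular $\kappa^h(M)\geq 0$. By Lemma \ref{birgn} I may replace $M$ with a birational model $X$ admitting an Iitaka fibration $\phi\colon X\to Y$ whose generic fiber has complex dimension $k$ and Kodaira dimension zero. Inspecting the proof of Theorem \ref{Kod>0} shows that Conjecture \ref{kod0ame} is invoked only in the dimension of the Iitaka fiber (or, in the edge case $\kappa^h(M)=0$, in dimension $n$ itself, which then is at most $3$). For $k\in\{1,2,3\}$ this conjecture is a theorem: elliptic curves for $k=1$, the Enriques--Kodaira classification for $k=2$, and Koll\'ar~\cite{Kol95} for $k=3$. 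Hence the generic fiber of $\phi$ has virtually abelian, in particular amenable, fundamental group. Theorem \ref{Kol2.12}(1) then produces around each $y\in Y$ a neighborhood $U$ with $\pi_1(\phi^{-1}(U))$ amenable, and the vanishing corollary of the Mapping Theorem of~\cite{Gromov} that is used inside the proof of Theorem \ref{Kod>0} yields $\|M\|=\|X\|=0$, contradicting the hypothesis.

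For the second assertion ($n=3$), the possible values of $\kappa^h(M)$ are $-\infty,0,1,2,3$. The three values $0,1,2$ coincide with $n-3,n-2,n-1$ and are ruled out by the first assertion. The remaining case $\kappa^h(M)=-\infty$ is handled by combining Mori's theorem~\cite{Mo}, which settles Mumford's Conjecture \ref{uniruled} in complex dimension three, with Proposition \ref{uniruledgn0}: $M$ would then be uniruled and would satisfy $\|M\|=0$. Therefore $\kappa^h(M)=3$.

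The heavy lifting is already done in Theorem \ref{Kod>0} and Proposition \ref{uniruledgn0}; at the level of the corollary the only things to verify are the bookkeeping points (i) that the proof of Theorem \ref{Kod>0} appeals to Conjecture \ref{kod0ame} only in the dimension of the Iitaka fiber, and (ii) that both conjectures are unconditional theorems in complex dimension $\leq 3$. I do not anticipate any genuine obstacle beyond this careful accounting of dimensions.
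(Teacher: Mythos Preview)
Your proposal is correct and follows essentially the same route as the paper: both arguments observe that Conjecture~\ref{kod0ame} is a theorem in complex dimension $\le 3$ (classification for curves and surfaces, Koll\'ar~\cite{Kol95} for $3$-folds), so the argument of Theorem~\ref{Kod>0} goes through unconditionally whenever the Iitaka fiber has dimension at most $3$, and then handle $\kappa^h=-\infty$ for $n=3$ via Mori~\cite{Mo} plus Proposition~\ref{uniruledgn0}. Your explicit accounting that Conjecture~\ref{kod0ame} is needed only in the dimension of the Iitaka fiber (and that the edge case $\kappa^h(M)=0$ forces $n\le 3$) is exactly the bookkeeping the paper compresses into the phrase ``follows from the argument of Theorem~\ref{Kod>0}.''
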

\begin{proof}
The fundamental group of a smooth projective $n$-fold, $n\le 3$, of Kodaira dimension zero is amenable. The case of $n=1,2$ follows from the classification. By~\cite[(4.17.3)]{Kol95},  the fundamental group of a smooth projective $3$-fold of Kodaira dimension zero has a finite index Abelian subgroup. In particular, it is amenable. Hence, the first part of our corollary follows from the argument of Theorem \ref{Kod>0}.

We still need to show that any smooth complex projective $3$-fold with $\kappa^h(M)=-\infty$ must have $\|M\|=0$. It follows from~\cite{Mo} that any complex projective $3$-fold has $\kappa^h=-\infty$ if and only if it is uniruled. Then Proposition \ref{uniruledgn0} implies $\|M\|=0$. 
\end{proof}

We remark that Theorem \ref{Kod>0} also provides an alternative argument that any smooth K\"ahler surface with non-vanishing simplicial volume is a surface of general type: 
First, by classification of complex surfaces, any K\"ahler surface can be deformed to, in particular it is diffeomorphic to, a projective surface. By Theorem \ref{Kod>0}, any smooth projective surface with non-vanishing simplicial volume and nonnegative Kodaira dimension must have $\kappa^h=2$. On the other hand, any K\"ahler surface with $\kappa^h=-\infty$ is rational or ruled, which has vanishing simplicial volume. 

We can answer Question \ref{topKod0norm}(1) for smooth K\"ahler $3$-folds with the help of $K_X$-MMP and the Abundance conjecture which is established for K\"ahler $3$-folds~\cite{HP, CHP}. 
In fact, by~\cite{BHL, Luni}, we know that for any compact K\"ahler manifold $X$ of complex dimension three, there exists a bimeromorphic K\"ahler manifold $X'$ which is deformation equivalent to a projective manifold. Hence, by Lemma \ref{birgn} and Corollary \ref{posKod}, we have 

\begin{thm}\label{Kah3}
If $X$ is a smooth K\"ahler $3$-fold with non-vanishing simplicial volume, then $\kappa^h(X)=3$. 
\end{thm}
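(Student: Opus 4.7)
The plan is to reduce the K\"ahler case to the projective case already handled in Corollary \ref{posKod}(2), using bimeromorphic modifications and deformations to transfer both the simplicial volume and the holomorphic Kodaira dimension between models.

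First, starting with a smooth K\"ahler $3$-fold $X$ with $\|X\|>0$, I would invoke the results of~\cite{BHL, Luni} cited just before the statement: there exists a smooth K\"ahler $3$-fold $X'$ which is bimeromorphic to $X$ and deformation equivalent to a smooth projective $3$-fold $X''$. By Lemma \ref{birgn}, bimeromorphic smooth K\"ahler manifolds have the same simplicial volume, hence $\|X'\|=\|X\|>0$. Since deformation equivalence in particular implies diffeomorphism, $X'$ and $X''$ are diffeomorphic, so $\|X''\|=\|X'\|>0$ as well.

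Next, I would apply Corollary \ref{posKod}(2) to the smooth projective $3$-fold $X''$ with non-vanishing simplicial volume to conclude $\kappa^h(X'')=3$. To transfer this back to $X$, I use two classical invariance properties of the holomorphic Kodaira dimension in the K\"ahler category: it is a bimeromorphic invariant, and, for compact K\"ahler manifolds, it is a deformation invariant (this is the relevant Iitaka-type invariance for K\"ahler deformations, which is what underlies the usefulness of the $X'\leadsto X''$ approximation in the first place). Thus $\kappa^h(X)=\kappa^h(X')=\kappa^h(X'')=3$, as desired.

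The substantive content is already packaged in the cited inputs, so there is no genuine obstacle to overcome here; the proof is essentially a chaining of three facts (algebraic approximation of K\"ahler $3$-folds, bimeromorphic invariance of $\|\cdot\|$, and the projective case). The only point to be careful about is that the ``deformation equivalent to a projective manifold'' statement from~\cite{BHL, Luni} preserves both $\|\cdot\|$ (via diffeomorphism) and $\kappa^h$ (via K\"ahler deformation invariance), so that Corollary \ref{posKod}(2) applies to a model whose invariants faithfully reflect those of $X$.
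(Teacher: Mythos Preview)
Your proposal is correct and follows essentially the same route as the paper's proof: both pass from $X$ to a bimeromorphic model $X'$ admitting an algebraic approximation via \cite{BHL, Luni}, invoke Lemma~\ref{birgn} to transport the simplicial volume, and then apply Corollary~\ref{posKod} to the projective model. You are somewhat more explicit than the paper in spelling out the transfer of $\kappa^h$ back to $X$ via bimeromorphic and deformation invariance; the paper leaves this step implicit in the phrase ``Hence, by Lemma~\ref{birgn} and Corollary~\ref{posKod}'', and also records (immediately after the theorem) a second, MMP/Abundance-based argument that avoids the approximation step altogether.
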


Combining Corollary \ref{posKod} and Theorem \ref{Kah3} we obtain Theorem \ref{t:posKod}. By Lemma \ref{birgn}, Corollary \ref{posKod} works for Moishezon manifolds and Theorem \ref{Kah3} works for complex $3$-folds of Fujiki class $\mathcal C$.  

Moreover, any  smooth K\"ahler $n$-fold with $\kappa^h=n-1$ must have vanishing simplicial volume, since it satisfies 
the above mentioned version of algebraic approximation~\cite{BHL}. This approach might be generalized to higher dimensional K\"ahler manifolds. Although there are Voisin's examples in each even complex dimension $\ge 8$  of compact K\"ahler manifolds all of whose smooth bimeromorphic models are homotopically obstructed to being a projective variety~\cite{Voisin}, these examples are all uniruled. In fact, it is conjectured by Peternell that this phenomenon cannot happen when the Kodaira dimension is non-negative~\cite{Luni}. 

There is a more direct approach.  By running the MMP for a K\"ahler $3$-fold $X$, we obtain a $\mathbb Q$-factorial bimeromorphic model $X_{min}$ of $X$ with at worst (isolated) terminal singularities whose canonical bundle $K_{X_{min}}$ is nef. By the Abundance conjecture, which is known for K\"ahler $3$-folds, there is some positive number $m$ such that $mK_{X_{min}}$ is base-point free and that the linear system $|mK_{X_{min}}|$ defines a fibration with base dimension $\kappa^h(X)$. We can blow up the total space to get a fibered smooth K\"ahler manifold. The argument for Theorem \ref{Kod>0} still applies.

\bibliographystyle{amsplain}

\end{document}